\def\QED{\nolinebreak \hspace*{\fill} \ensuremath{\square}}
\newcommand{\comments}[1]{}
\newcommand{\N}{\mathbb{N}}
\newcommand{\Q}{\mathbb{Q}}
\newcommand{\R}{\mathbb{R}}
\newcommand{\cs}{2^\omega}
\newcommand{\fs}{2^{<\omega}}
\newcommand{\MLR}{\text{MLR}}
\newcommand{\restrict}{\upharpoonright}
\newcommand{\n}{\! \! \!}
\newcommand{\wn}{\widehat{\nu}}
\newcommand{\M}{\mathcal{M}}
\newtheorem{theorem}{Theorem}
\newtheorem{lemma}[theorem]{Lemma}
\newtheorem{remark}[theorem]{Remark}
\newtheorem{definition}[theorem]{Definition}
\begin{document}
	\title{
		How much randomness is needed for statistics?
	}
	\author{Bj\o rn Kjos-Hanssen\\ Antoine Taveneaux\\ Neil Thapen}
	\maketitle
	\begin{abstract}
		In algorithmic randomness, when one wants to define a randomness notion
		with respect to some non-computable measure $\lambda $, a choice needs to be made.
		One approach is to allow randomness tests to access the measure $\lambda $ as
		an oracle (which we call the ``classical approach''). The other approach is the
		opposite one, where the randomness tests are completely effective and do
		not have access to the information contained in $\lambda $ (we call this
		approach ``Hippocratic''). While the Hippocratic approach is in general
		much more restrictive, there are cases where the two coincide. The first
		author showed in 2010 that in the particular case where the notion of
		randomness considered is Martin-L\"of randomness and the measure $\lambda $ is
		a Bernoulli measure, classical randomness and Hippocratic randomness
		coincide. In this paper, we prove that this result no longer holds for
		other notions of randomness, namely computable randomness and
		stochasticity.
	\end{abstract}

	\section{Introduction}
		In algorithmic randomness theory we are interested in
		which almost sure properties of an infinite sequence of bits are effective or computable in some sense.
		Martin-L\"of defined randomness with respect to the
		uniform fair-coin measure $\mu$ on $\cs$ as follows.

		\begin{quote}
			A sequence $X\in \cs $ is \emph{Martin-L\"of random} if we have
			$ X \not\in \bigcap_{n \in \N} \mathcal{U}_n$
			for every sequence of uniformly $\Sigma^0_1 $ (or effectively open) subsets of $\cs $ such that $\mu (\mathcal{U}_n) \leq 2^{-n}$.
		\end{quote}

		Now if we wish to consider Martin-L\"of randomness for a Bernoulli measure~$\mu_p $
		(that is, a measure such that the $i^{\text{th}}$ bit is the result of a
		Bernoulli trial with parameter~$p\in [0,1]$),
		we have two possible ways to extend the previous definition.

		The first option is to consider $p$ as an oracle (with an oracle $p$ we can compute~$\mu_p $) and relativize everything to this oracle.
		Then $X$ is $\mu_p $-Martin-L\"of random if
		for every sequence ${(\mathcal{U}_n)}_{n\in \N}$ of uniformly $\Sigma_1^0[p]$ sets such that
		$\mu_p (\mathcal{U}_n) \leq 2^{-n}$ we have $ X \not\in \bigcap_{n \in \N} \mathcal{U}_n$.
		We will call this approach the \emph{classical}\footnote{
			The classical approach has actually
			two approaches. Reimann and Slaman \cite[arXiv:0802.2705, Definition 3.2.]{RS}
			defined a real $x$ to be $\mu$-random if, for some oracle $z$ computing $\mu$, the real $x$ is
			$\mu$-random relative to $z$. Levin \cite{MR0414222} and G\'acs \cite{MR2159646} use a uniform test,
			which is a left-c.e. function $u : 2^\omega \times M(2^\omega)\rightarrow [0,\infty]$
			such that $\int u(x, \mu)d\mu \le 1$ for all $\mu$ where $M(2^\omega)$ is the space of probability
			measures on $2^\omega$. Since there is a universal uniform test $u_0$, define $x$ to be $\mu$-random if
			$u_0(x, \mu) < \infty$. Day and Miller \cite{MR3042595}
			showed that these approaches actually coincide.
		}
		notion of Martin-L\"of randomness relative to $\mu_p$.

		Another option is to keep the measure $\mu_p $ ``hidden'' from the process which describes the sequence $(\mathcal{U}_n)$.
		One can merely replace $\mu$ by $\mu_p $ in Martin-L\"of's definition but still require $(\mathcal{U}_n)$ to be uniformly $\Sigma_1^0 $ in the unrelativized sense.
		This notion of randomness was introduced by Kjos-Hanssen \cite{K10} who called it \textit{Hippocratic randomness};
		Bienvenu, Doty and Stephan \cite{BDS09} used the term \textit{blind randomness}.
		
		Kjos-Hanssen showed that for Bernoulli measures, Hippocratic and classical randomness coincide in the case of Martin-L\"of randomness.
		Bienvenu, G\'acs, Hoyrup, Rojas and Shen \cite{BGHRS11} extended Kjos-Hanssen's result to other classes of measures.
		Here we go in a different direction and consider weaker randomness notions, such as computable randomness and stochasticity.
		We discover the contours of a dividing line for the type of betting strategy that is needed
		in order to render the probability distribution superfluous as a computational resource.

		We view \emph{statistics} as the discipline concerned with determining the underlying probability distribution $\mu_p$ by looking at the bits of a random sequence.
		In the case of Martin-L\"of randomness it is possible to determine $p$ (\cite{K10}), and therefore Hippocratic randomness and classical randomness coincide.
		In this sense, Martin-L\"of randomness is sufficient for statistics to be possible,
		and it is natural to ask whether smaller amounts of randomness, such as computable randomness, are also sufficient.

		\paragraph{Notation} Our notation generally follows Nies' monograph \cite{Nies_2009}.
		We write $2^n$ for ${\{0, 1\}}^n$, and for sequences $\sigma \in 2^{\le \omega}$ we will
		also use $\sigma$ to denote the real with binary expansion $0.\sigma$, that is,
		the real $\sum_{i=1}^{ \infty} \sigma (i) 2^{- i}$. We use
		$\varepsilon$ to denote the empty word, $\sigma(n)$ for the ${n}^{\text{th}}$ element of
		a sequence and $\sigma \restrict n$ for the sequence formed by the first $n$
		elements. For sequences $\rho, \sigma$ we write $\sigma \prec \rho$ if $\sigma$ is a
		proper prefix of $\rho$
		and denote the concatenation of $ \sigma $ and $\rho $ by $\sigma. \rho $ or simply
		$\sigma \rho$.
		Throughout the paper we set $n'=n(n-1)/2$.

		\subsection{Hippocratic martingales}

			Formally a martingale is a function $\M : \fs \rightarrow \mathbb{R}^{\ge 0}$ satisfying
			\[
				\mathcal{M} (\sigma) =\frac{\mathcal{M}(\sigma 0) + \mathcal{M}(\sigma 1)}{2}.
			\]
			Intuitively, such a function
			arises from a betting strategy for a fair game played with an unbiased coin
			(a sequence of Bernoulli trials with parameter $1/2$).
			In each round of the game we can choose our stake, that is,
			how much of our capital we will bet,
			and whether we bet on heads ($1$) or tails ($0$). A coin is tossed, and if we bet correctly
			we win back twice our stake.

			Suppose that our betting strategy
			is given by some fixed function $S$ of the history $\sigma$ of the game
			up to that point. Then it is easy
			to see that the function
			$\M(\sigma)$ giving our capital after a play $\sigma$ satisfies the above equation.
			On the other hand, from any $\M$ satisfying the equation we can recover a
			corresponding strategy $S$.

			More generally, consider a biased coin which comes
			up heads with probability $p \in (0,1)$. In a fair game played
			with this coin, we would expect to win back $1/p$ times our stake if
			we bet correctly on heads, and $1/(1-p)$ times our stake if
			we bet correctly on tails. Hence we define a $p$-martingale
			to be a function satisfying
			\[
				\M (\sigma) = p \M (\sigma 1) + (1-p) \M (\sigma 0).
			\]
			We can generalize this further, and for any
			probability measure $\mu$ on $\cs$ define a \emph{$\mu$-martingale} to be a function
			satisfying
			\[
				\mu (\sigma) \M (\sigma) = \mu (\sigma 1) \M (\sigma 1) +
				\mu (\sigma 0) \M (\sigma 0).
			\]

			For the Bernoulli measure with parameter $p$,
			we say that a sequence $X \in \cs$ is \emph{$p$-computably random}
			if for every total, $p$-computable $p$-martingale $\M$,
			the sequence ${(\M(X \restrict n))}_n$ is bounded.

			This is the classical approach to $p$-computable randomness.
			Under the Hippocratic approach, the bits of the parameter $p$ should not be available
			as a computational resource. The obvious change to the definition would be to
			restrict to $p$-martingales $\M$ that are computable without an oracle for $p$.
			However this does not give a useful definition,
			as $p$ can easily be recovered from any non-trivial
			$p$-martingale.
			Instead we will define $\mu_p$-Hippocratic computable martingales in terms of
			their \emph{stake function} (or \emph{strategy}) $S$.

			We formalize $S$ as a function
			$\fs \rightarrow [-1, 1] \cap \mathbb{Q}$.\footnote{
				The
				restriction to $\mathbb{Q}$ is justified by the fact that we can restrict to $\mathbb{Q}$ in
				the definition of computable randomness.
			} The absolute value $|S(\sigma)|$
			gives the fraction of our capital we put up as our stake, and
			we bet on $1$ if $S(\sigma) \ge 0$ and on $0$ if $S(\sigma) <0$.
			Given $\alpha \in (0,1)$, the $\alpha$-martingale $\M^\alpha$ arising from $S$
			is then defined inductively by
			\begin{align*}
				\M^\alpha (\varepsilon) &= 1 \\
				\M^\alpha (\sigma 1) &= \M^\alpha (\sigma)
				\Big( 1 - |S(\sigma)| + \frac{|S(\sigma)|}{\alpha} 1_{\{S(\sigma) \geq 0 \}} \Big)\\
				\M^\alpha (\sigma 0) &= \M^\alpha (\sigma)
				\Big( 1 - |S(\sigma)| + \frac{|S(\sigma)|}{1-\alpha} 1_{\{S(\sigma) < 0 \}} \Big)
			\end{align*}
			where, for a formula $T$, we use the notation $1_{\{T\}}$ to mean the function which
			takes the value $1$ if $T$ is true and $0$ if $T$ is false.

			We define a \emph{$\mu_p$-Hippocratic computable martingale} to be a
			$p$-martingale $\M^p$ arising from some total computable (without access to $p$)
			stake function $S$. We say that a sequence $X \in \cs$ is \emph{$\mu_p$-Hippocratic computably random}
			if for every $\mu_p$-Hippocratic computable martingale $\M$,
			the sequence ${(\M(X \restrict n))}_n$ is bounded.

			In Section \ref{computrandomness_sect} below
			we show that for all $p \in \MLR$ the set of $\mu_p$-Hippocratic computably random sequences is strictly bigger than the set of $p$-computably random sequences.
			More precisely, we show that we can compute a sequence $Q \in \cs $ from $p$ such that $Q$ is $\mu_p$-Hippocratic computably random.
			In a nutshell, the proof works as follows. We use the number $p$ in two ways.
			To compute the $i^{\text{th}}$ bit of $Q$, the first $i$ bits of $p$ are treated as a parameter
			$r= 0 . p_1 \dots p_{i}$, and we pick the $i^{\text{th}}$ bit of $Q$ to look like it has been chosen at random in a Bernoulli trial with bias $r$.
			To do this, we use some fresh bits of $p$ (which have not been used so far in the construction of $Q$)
			and compare them to $r$,
			to simulate the trial. Since these bits of $p$ were never used before, if we know only the first $i-1$ bits of $Q$ they appear random,
			and thus the $i^{\text{th}}$ bit of $Q$ indeed appears to be chosen at random with bias $r$.
			Since $r=0.p_1 p_2 \dots p_i$ converges quickly to $p$,
			\footnote{By the Law of the Iterated Logarithm and since $2^{-n}=o(1/\sqrt{n\log\log n})$, this convergence is faster than the deviations created by statistical noise in a real sequence of Bernoulli trials with parameter $p$.}
			we are able to show that $Q$ overall looks $p$-random as long as we do not have access to $p$, in other words,
			that $Q$ is $\mu_p$-Hippocratic computably random.

		\subsection{Hippocratic stochasticity and KL randomness}

			In Section
			\ref{sect_hypocratic_stoc} we consider
			another approach to algorithmic randomness, known as stochasticity.
			It is reasonable to require that a random sequence satisfies the law of large numbers,
			that is, that the proportion of $1$s in the sequence converges to the bias $p$.
			But, for an unbiased coin, the string
			\[
				010101010\dots
			\]
			satisfies this law but is clearly not random.
			Following this idea, we say that a sequence $X$ is
			\emph{$p$-Kolmogorov--Loveland stochastic} (or \emph{$\mu_p$-KL stochastic}) if
			there is no $p$-computable way to select infinitely many bits from $X$,
			where we are not allowed to know the value of a bit before we select it,
			without the selected sequence satisfying the law of large numbers
			(see Definition \ref{def_KL_Stoc} for a formal approach).

			For this paradigm the Hippocratic approach is clear:
			we consider only selection functions which are computable without an oracle for $p$.
			We show in Theorem \ref{Thm_hypo_KL_stoc} that for $p \in \Delta_2^0 \cap \MLR$
			there exists a sequence $Q$ which is $\mu_p$-Hippocratic KL stochastic but not $\mu_p$-KL stochastic.
			Again we use $p$ as a random bit generator and create a sequence $Q$ that appears random
			for a sequence of Bernoulli trials, where the bias of the
			$i^{\text{th}}$ trial is $q_i$ for a certain sequence ${(q_i)}_i$ converging to $p$.
			Intuitively, the convergence is so slow
			that it is impossible to do (computable) statistics with $Q$ to recover $p$, and
			we are able to show that without access to $p$ the sequence $Q$ is
			$\mu_p$-KL stochastic.

			At the end of Section
			\ref{sect_hypocratic_stoc} we consider another notion,
			Kolmogorov--Loveland randomness. We give a simple argument to show
			that if we can compute $p$ from every $\mu_p$-Hippocratic KL random sequence, then
			the $\mu_p$-Hippocratic KL random sequences and the $\mu_p$-KL random sequences are the
			same (and vice versa).

	\section{Computable randomness}\label{computrandomness_sect}

		In this section we show that for any Martin-L\"of random bias $p$,
		$p$-computable randomness is a stronger notion
		than $\mu_p$-Hippocratic computable randomness.

		\begin{theorem}\label{computrandomness_thm}
			Let $\alpha \in \MLR$. There exists a sequence $Q\in \cs$,
			computable in polynomial time from $\alpha$, such that
			$Q$ is $\mu_\alpha$-Hippocratic computably random.
		\end{theorem}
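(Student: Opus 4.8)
The plan is to build $Q$ from $\alpha$ exactly along the lines sketched after the statement of Theorem~\ref{computrandomness_thm}: treat successive blocks of bits of $\alpha$ as a random source, and use the $i^{\text{th}}$ block to simulate a Bernoulli trial whose bias is the rational $r_i = 0.\alpha_1\alpha_2\cdots\alpha_i$, letting $Q(i)$ be the outcome of that trial. Concretely, I would partition $\N$ into consecutive intervals $I_1, I_2, \dots$ with $|I_i|=i$ (so $I_i$ starts at position $n'=i(i-1)/2$), read $\tau_i = \alpha\restrict I_i \in 2^i$ as a dyadic rational in $[0,1)$, and set $Q(i) = 1$ iff $\tau_i < r_i$. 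Since each $\tau_i$ uses a fresh block of bits of $\alpha$ never consulted before, and $\alpha$ is Martin-L\"of random hence its bits are ``sufficiently independent,'' conditioned on $Q\restrict (i-1)$ the block $\tau_i$ still looks uniformly random, so $Q(i)$ looks like a genuine $\mathrm{Bernoulli}(r_i)$ bit. This whole construction is clearly polynomial-time in $\alpha$ (to produce $Q\restrict n$ we read $O(n^2)$ bits of $\alpha$ and do $O(n)$ rational comparisons of size $O(n)$).

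The core of the argument is then to show that no total computable stake function $S$ (computable \emph{without} $\alpha$) produces an $\alpha$-martingale $\M^\alpha$ that succeeds on $Q$. Fix such an $S$ and the associated martingale $\M = \M^\alpha$. The key identity is that $\M^\alpha$ is a fair bet against bias $\alpha$: after seeing $\sigma$, the conditional expectation of the multiplicative gain on the next bit, computed with respect to $\mathrm{Bernoulli}(\alpha)$, equals $1$. So the idea is to compare the ``true'' bias $\alpha$ that $\M$ is betting against with the bias $r_i$ that actually governed $Q(i)$, and to bound the expected loss coming from the mismatch $|\alpha - r_i| \le 2^{-i}$. Writing $g_i$ for the multiplicative gain factor on step $i$ (a function of $Q\restrict i$ and $S$), one has $\mathbb{E}_{\mathrm{Ber}(\alpha)}[g_i \mid Q\restrict(i-1)] = 1$, whereas under the actual distribution $\mathbb{E}_{\mathrm{Ber}(r_i)}[g_i \mid Q\restrict(i-1)] = 1 + e_i$ where $|e_i|$ is controlled by $|S(\sigma)|\cdot|\alpha - r_i|/(\alpha(1-\alpha))$, which is $O(2^{-i})$ once $\alpha$ is pinned away from $0$ and $1$ (true since $\alpha \in \MLR$, so some $\alpha\restrict k$ is neither all-$0$ nor all-$1$ and we can ignore, or absorb, the first few steps). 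Hence $\log \M(Q\restrict n) = \sum_{i\le n}\log g_i$ and after centering, $\sum_i \log g_i$ is (up to a summable correction $\sum_i |e_i| < \infty$) a martingale-difference sum with bounded increments, so by a martingale law of large numbers / Azuma-type concentration it does not tend to $+\infty$ almost surely under the true distribution of $Q$.

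To turn ``almost surely under the true distribution'' into the actual statement about our specific $Q$, I would run the last step effectively: the set of sequences $X$ on which $\M$ succeeds is, together with the mismatch correction, captured by a test that is computable from $S$ alone, and the ``bad'' event (relative to the product-of-$\mathrm{Ber}(r_i)$ measure $\nu = \prod_i \mathrm{Ber}(r_i)$) has $\nu$-measure zero at the appropriate effective level. Since the map $\alpha \mapsto Q$ pushes the uniform measure $\mu$ forward to exactly $\nu$ by construction, and since $\alpha \in \MLR$ for $\mu$, the image $Q$ avoids every $\nu$-effective-null set of the relevant kind — in particular the success set of $\M$. This preservation-of-randomness-under-a-computable-measure-preserving map is the step I expect to be the main obstacle: one must verify that the relevant null set is genuinely an effective ($\Sigma^0_1$-approximable with the right measure bound, or a left-c.e.\ supermartingale) object \emph{uniformly in $S$ but not needing $\alpha$}, which is why the construction was carefully arranged so that the fresh-block independence holds against the $\alpha$-free strategy $S$. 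The estimate $|\alpha-r_i|\le 2^{-i}$ (noted in the excerpt's footnote to beat the Law of the Iterated Logarithm fluctuations $\sim \sqrt{i \log\log i}$ of real $\mathrm{Ber}(\alpha)$ trials) is exactly what makes the correction term summable and the transfer go through; getting the quantifiers and the effectivity of the test right, rather than the probabilistic estimate itself, will be the delicate part.
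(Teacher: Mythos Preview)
Your construction of $Q$ is exactly the paper's, and your intuition that $\M^\alpha$ is ``nearly fair'' against the actual bias $r_i$ with a summable defect $|e_i| = O(2^{-i})$ is the right heuristic. But the effectivity step, which you correctly flag as the delicate one, does not go through along the lines you sketch. First, the pushforward claim is false: the computable map $\Gamma : X \mapsto Q(X)$ pushes the uniform measure to a fixed measure $\Gamma_*\mu$, not to $\nu_\alpha = \prod_i \mathrm{Ber}(r_i(\alpha))$, since $r_i$ depends on the input $X$, and the bits of $\Gamma(X)$ are correlated through the shared prefix $X\restrict i$. So randomness preservation gives you that $Q$ is $\Gamma_*\mu$-random, which tells you nothing about $\nu_\alpha$. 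Second, and more fundamentally, the object you want to test is the success set of $\M^\alpha$, and $\M^\alpha$ is \emph{not} computable without $\alpha$; you never say what computable quantity stands in for it in your test. Writing the test on $\alpha$-space as $\{X : \M^X \text{ succeeds on } \Gamma(X)\}$ fixes the dependence on $\alpha$ but is still not $\Sigma^0_1$, because $\M^X(\sigma)$ requires all of $X$.

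The paper resolves this by replacing $\M^\alpha$ with the finite-data approximation $\M^{X\restrict k'}$ and defining $U_j' = \{X\restrict k' : \M^{X\restrict k'}(\Gamma(X\restrict k')) > 2^j\}$, which \emph{is} uniformly $\Sigma^0_1$. Two lemmas then do the real work: a continuity estimate (Lemma~\ref{approxlemmamartin}) showing $|\M^{X\restrict k'}(\eta) - \M^X(\eta)|$ is small for $|\eta|\le k$, so that $\alpha$ lies in every $U_j$; and an ``almost Kolmogorov inequality'' (Lemma~\ref{almost_Kolmogorov_inequality}) bounding $\sum_{\tau\in Z} 2^{-|\tau|}\M^\tau(\Gamma(\tau))$ over prefix-free $Z$, which gives $\mu(U_j) = O(2^{-j})$. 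The latter is the effective incarnation of your ``$1+e_i$'' observation, carried out multiplicatively via Markov's inequality on $\M$ itself rather than Azuma on $\log \M$ (note that Azuma would fail anyway: the increments $\log g_i$ are unbounded below whenever $|S(\sigma)|$ is close to $1$). Your outline is missing both of these ingredients, and without the first one there is simply no candidate effective test.
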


		Before giving the proof, we remark that
		a stronger version of the theorem is true:
		the sequence $Q$ is in fact
		$\alpha$-Hippocratic partial computably random
		(meaning that we allow the martingale to be a partial computable function; see \cite[Definition 7.4.3]{Downey:2010:ARC:1205766}).

		Also, a sceptic could (reasonably) complain that it is not
		really natural for us to make bets without any idea about our current capital. However
		if we add an oracle to give the integer part of our capital at each step (or even an approximation with accuracy $2^{-n}$ when we bet on the $n^{\text{th}}$ bit),
		Theorem \ref{computrandomness_thm} remains true and the proof is the same.
		In the same spirit we could object that it is more natural to have a stake function giving the amount of our bet
		(to be placed only if we have a capital large enough)
		and not the proportion of our capital. For this definition of a Hippocratic computable martingale, similarly the theorem remains true and the proof is the same.

		\begin{proof}
			Let $\alpha \in \MLR$. Then $\alpha$ is not rational and
			cannot be represented by a finite binary sequence and we can suppose that $0< \alpha < 1/2$.
			Recall that $n'=n(n-1)/2$ and that we freely identify a sequence $X$ (finite or infinite) with the real number with the binary expansion $0.X$.

			The proof has the following structure.
			First, we describe an algorithm to compute a sequence $Q$ from $\alpha $.
			To compute each bit $Q_n$ of $Q$ we will use a finite initial segment of $\alpha$
			as an approximation of $\alpha$,
			and we will compare this with some other fresh bits of $\alpha$ which we treat as though
			they are produced by a random bit generator.
			In this way $Q_n$ will approximate the outcome of a Bernoulli trial
			with bias $\alpha$.

			Second, we will suppose for a contradiction that there is an $\alpha$-Hippocratic computable martingale
			(that is, a martingale that arises from a stake function computable without $\alpha$)
			such that the capital of this martingale is not bounded on $Q$.
			We will show that we can use this stake function to construct
			a Martin L\"{o}f test ${(U_n)}_n$ such that $\alpha$ does not pass this test.

			So let $Q = Q_1 Q_2 \dots$ be defined by the condition that:
			\[
				Q_n = \left\{
				\begin{array}{ll}
				0 & \text{ if } 0.\alpha_{n' + 1} \dots \alpha_{n' + n}\geq 0. \alpha_{1} \dots \alpha_{n}, \\
				1& \text{ otherwise.}
				\end{array}
				\right.
			\]
			We can compute $Q$ in polynomial time from $\alpha$,
			as we can compute each bit $Q_n$ in time $O(n^2)$.

			Now let $S : \fs \rightarrow \Q \cap [-1, 1]$ be a computable stake function.
			We will write $\M^X$ for the $X$-martingale arising from $S$.
			Suppose for a contradiction that
			\[
				\limsup_{n\rightarrow \infty} \mathcal{M}^{\alpha} (Q \restrict n) = \infty .
			\]

			Our goal is to use this to define a Martin-L\"of test which $\alpha$ fails.
			The classical argument (see \cite[Theorem 6.3.4]{Downey:2010:ARC:1205766})
			would be to consider the sequence of sets
			\[
				V_j = \{ X \in \cs | \exists n ~~ \mathcal{M}^{\alpha} (X \restrict n) > 2^j \},
			\]
			but without oracle access to $\alpha$ this
			is not $\Sigma_1^0$, and does not define a Martin-L\"of test.
			However it turns out that we can use a similar sequence of sets, based on the idea
			that, although we cannot compute $\M^\alpha$ precisely,
			we can approximate it using the approximation
			$\alpha_1 \dots \alpha_{n'}$ of $\alpha$.
			For this we will use the following lemma, giving, roughly speaking, a modulus of continuity for the map
			$(\alpha, X) \mapsto \mathcal{M}^{\alpha} (X)$.
			The proof
			is rather technical and we postpone it
			to later in this section.

			\begin{lemma} \label{approxlemmamartin}
				For $0<\alpha<1$,
				there exists $m \in \N$ such that $2^{-m} < \alpha\restrict m'$, and such that if $\sigma \succcurlyeq (\alpha \restrict m')$ and
				$\tau \succcurlyeq (\alpha \restrict m')$ then for all $\eta \in \fs$ and all
				$n \geq m$ we have:
				\[
					   \text{if }~ 0< \tau - \sigma < 2^{-n'} \text{ and }~ |\eta |\leq n+1
					\text{ then }~ |\mathcal{M}^{\sigma} (\eta) - \mathcal{M}^\tau (\eta) | \leq 2^{-n}.
				\]
			\end{lemma}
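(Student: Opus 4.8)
The plan is to prove the inequality $|\M^\sigma(\eta) - \M^\tau(\eta)| \le 2^{-n}$ by induction on the length of $\eta$, exploiting the recursive definition of $\M^\alpha$ from the stake function $S$. The key observation is that the maps $\alpha \mapsto 1/\alpha$ and $\alpha \mapsto 1/(1-\alpha)$ that appear in the recursion for $\M^\alpha(\sigma 1)$ and $\M^\alpha(\sigma 0)$ are Lipschitz on any interval bounded away from $0$ and $1$: if $\sigma, \tau \succcurlyeq \alpha \restrict m'$ and $\alpha \restrict m' > 2^{-m}$, then both $\sigma$ and $\tau$ lie in an interval $[c, 1-c]$ for some fixed $c = c(\alpha) > 0$ determined by the first $m'$ bits of $\alpha$, so on this interval $|1/\sigma - 1/\tau| \le |\sigma - \tau|/c^2$ and similarly for $1/(1-\sigma)$. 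First I would fix $m$ large enough that $2^{-m} < \alpha \restrict m'$ holds and that the resulting constant $c$ is comfortably positive; this pins down the base of the induction.

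For the inductive step, suppose the bound $|\M^\sigma(\eta) - \M^\tau(\eta)| \le 2^{-k}$ holds for all $\eta$ of length $\le k+1$ whenever $0 < \tau - \sigma < 2^{-k'}$, and consider $\eta$ of length $\le n+2$ with $0 < \tau - \sigma < 2^{-(n+1)'}$. Write $\eta = \rho b$ with $b \in \{0,1\}$ and $|\rho| \le n+1$. From the recursion, $\M^\sigma(\rho b)$ and $\M^\tau(\rho b)$ are each $\M^\cdot(\rho)$ times a factor of the form $1 - |S(\rho)| + |S(\rho)| \cdot g(\cdot)$ where $g$ is either $1/\alpha$, $1/(1-\alpha)$, or the constant $0$, depending only on the sign of $S(\rho)$ (which does not depend on the bias). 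So I would estimate
\[
	|\M^\sigma(\rho b) - \M^\tau(\rho b)| \le |\M^\sigma(\rho) - \M^\tau(\rho)| \cdot F_\sigma + \M^\tau(\rho) \cdot |F_\sigma - F_\tau|,
\]
where $F_\alpha = 1 - |S(\rho)| + |S(\rho)| g(\alpha)$. The first term is controlled by the inductive hypothesis (noting $0 < \tau - \sigma < 2^{-(n+1)'} < 2^{-n'}$, so the weaker hypothesis with parameter $n$ applies to $\rho$) times a bound on $F_\sigma$; the second term is controlled by the Lipschitz estimate $|g(\sigma) - g(\tau)| \le |\sigma - \tau|/c^2$ together with $|S(\rho)| \le 1$ and a bound on $\M^\tau(\rho)$.

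The main obstacle — and the reason the lemma is stated with the peculiar index shift $n'$ versus $n$ — is controlling the growth of $\M^\tau(\rho)$ and of the product of factors $F_\sigma$ as the induction proceeds: each factor can be as large as roughly $1/c$, so after $n+1$ steps $\M^\tau(\rho)$ and the accumulated product could be as large as $c^{-(n+1)}$, which would blow up the error. The quadratic gap between the length bound $n+1$ and the closeness bound $2^{-n'} = 2^{-n(n-1)/2}$ is exactly what absorbs this: the per-step error roughly multiplies the previous error by a constant and adds a term of size about $(\tau-\sigma)/c^{2} \cdot c^{-(n+1)}$, and summing a geometric-type series of $n+1$ such terms still leaves us with something bounded by $(\tau - \sigma) \cdot c^{-O(n)}$, which is $\le 2^{-n}$ once $\tau - \sigma < 2^{-n'}$ for $n \ge m$. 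So the careful bookkeeping will be: (i) show by a side induction that $\M^\tau(\rho) \le c^{-|\rho|}$ (or a similar explicit bound), (ii) show the product of the $F$-factors up to length $n+1$ is at most some $c^{-(n+1)}$, and (iii) choose $m$ large enough that for all $n \ge m$ the inequality $2^{-n'} \cdot (\text{polynomial in } n) \cdot c^{-O(n)} \le 2^{-n}$ holds, which is possible precisely because $n' = n(n-1)/2$ grows faster than any linear function of $n$. I expect step (iii), making the quantifier on $m$ depend correctly on $c = c(\alpha)$ and verifying the inequality for all sufficiently large $n$ uniformly, to be the delicate part.
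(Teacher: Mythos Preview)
Your proposal is correct and follows essentially the same line as the paper's proof: the paper isolates the core estimate as a separate lemma showing $|\M^\alpha(\eta) - \M^\beta(\eta)| < 2^{-k+r|\eta|}$ whenever $0<\beta-\alpha<2^{-k}$ (proved there by expanding the product $\prod_i R_i^\alpha - \prod_i R_i^\beta$ rather than by induction, but using exactly your Lipschitz bound on $1/\alpha$ and $1/(1-\alpha)$ and your per-step growth bound), and then chooses $m$ so that $r(n+1)-n' < -n$ for all $n\ge m$, which is precisely your ``quadratic gap absorbs linear growth'' step (iii). The only cosmetic difference is that the paper separates the two phases cleanly into a continuity lemma plus a one-line application, whereas you interleave them in a single induction on $n$; both routes yield the same bound and the same choice of $m$.
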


			Let $m$ be given by Lemma \ref{approxlemmamartin} and let $\rho$ be $\alpha \restrict m'$, so $2^{-m}<\rho$.
			Let $\Gamma :2^{\leq \omega} \rightarrow 2^{\leq \omega}$ be the operator
			which converts $\alpha_1 \dots \alpha_{n'}$ into $Q_1 \dots Q_n $. That is,
			$\Gamma ( \alpha_1 \dots \alpha_k) =Q_1 \dots Q_n$ where $n$ is the
			biggest integer such that $n'\leq k$.
			This notation naturally extends to infinite sequences so we may write
			$\Gamma(\alpha) = Q$.
			We consider the uniform sequence of $\Sigma_1^0$ sets
			\[
				U'_j =\{
					X_1 \dots X_{k'} |\rho\preccurlyeq X_1 \dots X_{k'}
					\text{ and }
					\mathcal{M}^{X_1 \dots X_{k'}} ( \Gamma (X_1 \dots X_{k'})) > 2^{j}
				\}.
			\]
			We let $U_j$ denote the set of infinite sequences with a prefix in $U'_j$.
			By Lemma \ref{approxlemmamartin},
			\[
				|
					\mathcal{M}^{\alpha}( \Gamma (\alpha_1 \dots \alpha_{k'})) -
					\mathcal{M}^{\alpha_1 \dots \alpha_{k'}} ( \Gamma (\alpha_1 \dots \alpha_{k'}))
				|
				< 2^{-k} \leq 1
			\]
			for all sufficiently large $k$.
			Since $\M^\alpha$ increases unboundedly on $Q = \Gamma(\alpha)$
			it follows that $\alpha \in U_j$ for all $j$.

			To show that $(U_j)$ is a Martin-L\"of test, it remains to show
			that the measure of $U_j$ is small.
			Since $\sigma \mapsto \mathcal{M}^{\sigma} (\sigma)$ is almost a $\alpha$-martingale,
			where $\sigma$ runs over the prefixes of $\alpha$, we will use a lemma
			similar to the Kolmogorov inequality (see \cite[Theorem 6.3.3]{Downey:2010:ARC:1205766}).
			Again we postpone the proof to later in this section.

			\begin{lemma}\label{almost_Kolmogorov_inequality}
				For any number $n \ge m$, any extension
				$\sigma \succcurlyeq \rho$ of length $n'$ and
				any prefix-free set
				$Z \subseteq \bigcup_{k \in \N} {\{ 0, 1 \}}^{k'}$ of extensions of $\sigma $, we have
				\[
					\sum_{\tau \in Z} 2^{-|\tau |} \mathcal{M}^{\tau} (\Gamma (\tau))
					\leq 2^{-|\sigma |} e^2 \left[ 1 +
					\mathcal{M}^{\sigma} (\Gamma (\sigma)) \right].
				\]
			\end{lemma}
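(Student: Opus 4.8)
The plan is to reduce the bound to a one-step ``almost supermartingale'' inequality and then aggregate it over $Z$ by induction on the lengths of its elements. The quantity one should track is not $\mathcal{M}^{\sigma}(\Gamma(\sigma))$ itself but $1 + \mathcal{M}^{\sigma}(\Gamma(\sigma))$: along the block-aligned prefixes of $\alpha$ this behaves like a supermartingale up to summable multiplicative corrections, and the extra $+1$ is what lets the (harmless, summable-in-$n$) additive errors coming from Lemma~\ref{approxlemmamartin} be absorbed.

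For the one-step estimate, fix $n \ge m$ and $\sigma \succcurlyeq \rho$ with $|\sigma| = n'$, and consider the $2^{n}$ strings $\sigma w$, $w \in \{0,1\}^{n}$: these are exactly the block-aligned extensions of $\sigma$ of the next admissible length $(n+1)' = n' + n$, and by the definition of $\Gamma$ we have $\Gamma(\sigma w) = \Gamma(\sigma)\,b(w)$, where the new bit $b(w)$ equals $1$ iff $0.w < 0.(\sigma \restrict n)$. Hence, over uniformly chosen $w$, the bit $b(w)$ is \emph{exactly} a Bernoulli trial with parameter $0.(\sigma \restrict n)$, and this parameter differs from the parameter $0.\sigma$ of the martingale $\mathcal{M}^{\sigma}$ by less than $2^{-n}$. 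Writing $\eta = \Gamma(\sigma)$ and using the $0.\sigma$-martingale identity for $\mathcal{M}^{\sigma}$, the average over $w$ of $\mathcal{M}^{\sigma}(\Gamma(\sigma w))$ equals $\mathcal{M}^{\sigma}(\eta)$ up to an additive error at most $2^{-n}\,\bigl|\mathcal{M}^{\sigma}(\eta 1) - \mathcal{M}^{\sigma}(\eta 0)\bigr|$; and from the definition of $\mathcal{M}^{\sigma}$ out of the stake function, together with $2^{-m} < \rho \le 0.\sigma$ and $0.\sigma < 1/2$, this difference is at most $2^{m}\,\mathcal{M}^{\sigma}(\eta)$. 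Finally Lemma~\ref{approxlemmamartin}, applied to the pair $\sigma, \sigma w$ (whose values as reals differ by less than $2^{-n'}$) with $|\Gamma(\sigma w)| \le n+1$, lets us replace $\mathcal{M}^{\sigma w}$ by $\mathcal{M}^{\sigma}$ inside the average at a further additive cost of at most $2^{-n}$. Collecting the two $2^{-n}$ errors and the multiplicative $2^{m-n}$ error and folding them into the $+1$ gives
\[
2^{-n}\!\!\sum_{w \in \{0,1\}^{n}} \bigl(1 + \mathcal{M}^{\sigma w}(\Gamma(\sigma w))\bigr) \;\le\; c_{n}\,\bigl(1 + \mathcal{M}^{\sigma}(\Gamma(\sigma))\bigr), \qquad c_{n} := 1 + 2^{m-n} .
\]

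To finish, I would prove by induction on $N := \max\{\, k : \{0,1\}^{k'} \cap Z \neq \emptyset \,\}$ that $\sum_{\tau \in Z} 2^{-|\tau|}\bigl(1 + \mathcal{M}^{\tau}(\Gamma(\tau))\bigr) \le 2^{-|\sigma|}\bigl(\prod_{j=n}^{N-1} c_{j}\bigr)\bigl(1 + \mathcal{M}^{\sigma}(\Gamma(\sigma))\bigr)$, where $|\sigma| = n'$. The base case $N = n$ forces $Z = \{\sigma\}$ and is immediate. For the step, let $\mathcal{G}$ be the set of length-$(N-1)'$ prefixes of the top-level elements of $Z$; by prefix-freeness no element of $\mathcal{G}$ lies in $Z$. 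For each $\gamma \in \mathcal{G}$, replacing the (possibly incomplete) family $\{\tau \in Z : \gamma \preccurlyeq \tau\}$ by the full family $\{\gamma w : w \in \{0,1\}^{N-1}\}$ only increases the corresponding nonnegative partial sum, which the one-step estimate then bounds by $c_{N-1}\,2^{-|\gamma|}\bigl(1 + \mathcal{M}^{\gamma}(\Gamma(\gamma))\bigr)$. Hence $\sum_{\tau \in Z} 2^{-|\tau|}(1 + \mathcal{M}^{\tau}(\Gamma(\tau))) \le c_{N-1}\sum_{\tau \in Z'} 2^{-|\tau|}(1 + \mathcal{M}^{\tau}(\Gamma(\tau)))$ (using $c_{N-1} \ge 1$), where $Z' := \{\tau \in Z : |\tau| < N'\} \cup \mathcal{G}$ is again a prefix-free set of block-aligned extensions of $\sigma$, of maximum length at most $(N-1)'$; the induction hypothesis applied to $Z'$ closes the loop. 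Since $\prod_{j \ge m} c_{j} = \prod_{j \ge m}\bigl(1 + 2^{m-j}\bigr) \le \exp\bigl(\sum_{j \ge m} 2^{m-j}\bigr) = e^{2}$ and $n \ge m$, and since $\mathcal{M}^{\tau}(\Gamma(\tau)) \le 1 + \mathcal{M}^{\tau}(\Gamma(\tau))$, this yields the claimed inequality.

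The main obstacle is the one-step estimate: one must see that $\sigma \mapsto \mathcal{M}^{\sigma}(\Gamma(\sigma))$ fails to be a genuine martingale only by errors summable in $n$, and precisely here the hypotheses $2^{-m} < \rho$ and $\alpha < 1/2$ are used, via the uniform bound $\bigl|\mathcal{M}^{\sigma}(\eta 1) - \mathcal{M}^{\sigma}(\eta 0)\bigr| \le 2^{m}\mathcal{M}^{\sigma}(\eta)$ and the fact that the resulting correction $\sum_{j \ge m} 2^{m-j}$ equals exactly $2$. The inductive aggregation is then routine, the only subtlety being that enlarging an incomplete level to a complete one preserves prefix-freeness of $Z'$.
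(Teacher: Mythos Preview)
Your argument is correct. It differs from the paper's proof in structure, and arguably it is cleaner.

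The paper inducts on the cardinality of $Z$ rather than on the maximum length of its elements. Its inductive hypothesis carries two separate correction terms---an additive $\sum_{i\ge n} 2e^2 2^{-i}$ and a multiplicative $\prod_{i\ge n}(1+2\cdot 2^{-i})$---rather than your single product $\prod_{j\ge n}(1+2^{m-j})$; your trick of tracking $1+\mathcal M^{\sigma}(\Gamma(\sigma))$ lets the additive error from Lemma~\ref{approxlemmamartin} be absorbed multiplicatively, which is what collapses the two terms into one. In the inductive step, the paper does not peel off one block level; instead it finds the longest block-aligned common prefix $\nu$ of $Z$, branches over the next block there, applies the hypothesis to the (strictly smaller) pieces, and then has to climb back from $\nu$ to $\sigma$ using the crude bound that a $\nu$-martingale multiplies its capital by at most $2^{n}$ per round (since $\nu>2^{-n}$). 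Your ``reduce the top level and replace it by its parents'' induction avoids this detour entirely; you never need the per-round growth bound, only the one-step almost-supermartingale inequality. The paper's one-step averaging
\[
\sum_{\theta\in\{0,1\}^{k}} 2^{-k}\,\mathcal M^{\nu}(\Gamma(\nu\theta)) \;\le\; (1+2\cdot 2^{-k})\,\mathcal M^{\nu}(\Gamma(\nu))
\]
is established by a direct computation splitting on $\theta\le\widehat\nu$ versus $\theta>\widehat\nu$ and using $1-\nu\ge 1/2$, which is morally the same as your bound via $|\mathcal M^{\sigma}(\eta 1)-\mathcal M^{\sigma}(\eta 0)|\le 2^{m}\mathcal M^{\sigma}(\eta)$ but gets a slightly different constant. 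Both routes land on the same $e^{2}$ via Lemma~\ref{convproductlemma}. One small point you left implicit: the reduction to finite $Z$ (equivalently, that the bound is uniform in $N$) is needed before the induction starts; the paper makes this explicit.
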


			Now fix $j$ and let $W_j$ be a prefix-free subset of $U'_j$ with the property that
			the set of infinite sequences with a prefix in $W_j$ is exactly $U_j$.
			Then by the definition of $U'_j$, if $\tau \in W_j$ then
			$\mathcal{M}^{\tau} (\Gamma (\tau)) \geq 2^j$.
			Hence by Lemma \ref{almost_Kolmogorov_inequality} we have:
			\[
				\mu ( U_j) = \sum_{\tau \in W_j} 2^{-|\tau |}
				\leq \sum_{\tau \in W_j} \frac{\mathcal{M}^{\tau} (\Gamma (\tau))}{2^j} 2^{-|\tau |}
				\leq \frac{ 2^{-|\rho |} e^2 \left(1 + \M^{\rho} (\Gamma (\rho)) \right)}{2^j}.
			\]
			Since $2^{-|\rho |} \left(1 + \M^{\rho} (\Gamma (\rho)) \right)$ is constant,
			this shows that $(U_j)$ is a Martin-L\"of test.
			As $\alpha \in \bigcap_j U_j$ it follows that $\alpha \not\in \MLR$.
			This is a contradiction.
			\QED
		\end{proof}

		Notice that this proof makes use of the fact that in our betting strategy
		we have to proceed monotonically from left to right through the string,
		making a decision for each bit in turn as we come to it.
		This is why our construction is able to use $\alpha$ as a random bit generator,
		because at each step it can use bits that were not used to compute the previous bits of $Q$.
		Following this idea the question naturally arises:
		if we are allowed to use a non-monotone strategy, then are the classical and Hippocratic
		random sequences the same? We explore this question in Section \ref{sect_hypocratic_stoc}.

		\medskip

		We now return to the postponed proofs.
		We will need a couple of technical lemmas,
		the first one is aiding Lemma \ref{approxlemmamartin} in giving (roughly speaking) a modulus of continuity for the map
		$(\alpha, X) \mapsto \mathcal{M}^{\alpha} (X)$.

		\begin{lemma}\label{approxlemma}
			Let $\epsilon >0$.
			Then there exists $r \in \N$ such that for all $k$ with $2^{-k}\epsilon^{-2}<1$,
			for all $\alpha, \beta \in 2^{\le \omega}$ with $\epsilon < \alpha < \beta < 1- \epsilon$ and for all
			non-empty $\sigma \in \fs$,
			\[
				0 < \beta - \alpha < 2^{-k}
				\quad
				\Longrightarrow
				\quad ~| \mathcal{M}^{\alpha} (\sigma) - \mathcal{M}^{\beta} (\sigma) |< 2^{-k+r |\sigma |}.
			\]
		\end{lemma}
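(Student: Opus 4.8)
The plan is to unfold the inductive definition of $\M^\alpha(\sigma)$ as a product. Writing $\sigma = b_1 b_2 \cdots b_\ell$ with $\ell = |\sigma|$, each factor in $\M^\alpha(\sigma)$ has the form
\[
	f_i(\alpha) \;=\; 1 - |S(\tau_i)| + \frac{|S(\tau_i)|}{\alpha}\,1_{\{S(\tau_i)\ge 0\}}
	\quad\text{or}\quad
	1 - |S(\tau_i)| + \frac{|S(\tau_i)|}{1-\alpha}\,1_{\{S(\tau_i)<0\}},
\]
depending on whether $b_i = 1$ or $b_i = 0$, where $\tau_i = b_1\cdots b_{i-1}$. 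So $\M^\alpha(\sigma) = \prod_{i=1}^{\ell} f_i(\alpha)$. The key observations are: (1) on the interval $(\epsilon, 1-\epsilon)$ each factor $f_i$ is bounded above by some constant $c = c(\epsilon)$ — indeed $f_i \le 1 + 1/\epsilon$ always — so $\M^\alpha(\sigma) \le c^{|\sigma|}$; and (2) each $f_i$ is Lipschitz in $\alpha$ on $(\epsilon, 1-\epsilon)$ with a constant depending only on $\epsilon$, since $|S| \le 1$ and the derivatives of $1/\alpha$ and $1/(1-\alpha)$ are bounded by $1/\epsilon^2$ there; thus $|f_i(\alpha) - f_i(\beta)| \le L|\beta - \alpha|$ with $L = L(\epsilon)$, say $L = \epsilon^{-2}$.

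Next I would use the elementary estimate for products: if $|a_i|, |b_i| \le c$ and $|a_i - b_i| \le \delta$ for all $i \le \ell$, then $\bigl|\prod a_i - \prod b_i\bigr| \le \ell\, c^{\ell-1}\delta$, proved by the standard telescoping sum $\prod a_i - \prod b_i = \sum_{j} \bigl(\prod_{i<j} a_i\bigr)(a_j - b_j)\bigl(\prod_{i>j} b_i\bigr)$. Applying this with $a_i = f_i(\alpha)$, $b_i = f_i(\beta)$, $c = 1 + 1/\epsilon$, $\delta = L\,|\beta-\alpha| \le L\,2^{-k}$, gives
\[
	|\M^\alpha(\sigma) - \M^\beta(\sigma)| \;\le\; \ell\, c^{\ell-1} L\, 2^{-k}.
\]
It then remains to absorb the polynomial-and-exponential prefactor $\ell\, c^{\ell-1} L$ into a clean bound of the form $2^{-k + r|\sigma|}$. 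Since $\ell c^{\ell-1} L \le 2^{r\ell}$ for a suitable constant $r = r(\epsilon)$ and all $\ell \ge 1$ — one can take $r = \lceil \log_2(cL) \rceil + 1$, using $\ell \le 2^\ell$ — this yields $|\M^\alpha(\sigma) - \M^\beta(\sigma)| \le 2^{-k + r|\sigma|}$, which is exactly the claimed inequality. The hypothesis $2^{-k}\epsilon^{-2} < 1$ is used only to guarantee $\delta < 1$, i.e.\ that the factors $f_i(\beta)$ stay comparable to $f_i(\alpha)$ so the crude product bound with common constant $c$ is legitimate; with a little care it is not even strictly needed, but it makes the bookkeeping cleanest.

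The only mild subtlety — not really an obstacle — is that $\alpha, \beta$ are allowed to be finite strings in $2^{\le\omega}$ rather than genuine reals, so one should read $\alpha$ as the dyadic rational $0.\alpha$; since $S$ depends only on $\sigma$ and not on $\alpha$, and the finitely many factors $f_i$ are evaluated at the real number $0.\alpha$, nothing changes. The real content is simply: a length-$\ell$ product of uniformly bounded, uniformly Lipschitz factors is itself Lipschitz with constant $\ell c^{\ell-1}L$, which is $\le 2^{r\ell}$. So I expect the proof to be short, with the main care going into choosing the constants $c$, $L$, $r$ explicitly in terms of $\epsilon$ and verifying the product-perturbation bound.
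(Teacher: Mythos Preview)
Your proof is correct and follows essentially the same strategy as the paper: write $\M^\alpha(\sigma)$ as a product of $|\sigma|$ factors, bound each factor uniformly by $c \approx 1 + 1/\epsilon$ and each factor-difference by $2^{-k}\epsilon^{-2}$, then combine into a bound of the form $2^{-k+r|\sigma|}$.

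The one noteworthy difference is in the product-perturbation step. The paper bounds $\prod R_i^\alpha - \prod R_i^\beta$ by expanding $\prod (R_i^\beta + 2^{-k}\epsilon^{-2}) - \prod R_i^\beta$ binomially, and needs the hypothesis $2^{-k}\epsilon^{-2} < 1$ so that the higher-order terms $(2^{-k}\epsilon^{-2})^{n-|Z|}$ are dominated by the first-order one; this yields the cruder bound $2^n(2^{-k}\epsilon^{-2})(2^s)^n$. Your telescoping identity $\prod a_i - \prod b_i = \sum_j \bigl(\prod_{i<j} a_i\bigr)(a_j-b_j)\bigl(\prod_{i>j} b_i\bigr)$ gives the sharper $\ell\,c^{\ell-1}\delta$ directly and, as you observe, does not actually rely on that hypothesis. (Your aside that the hypothesis is needed ``so the crude product bound with common constant $c$ is legitimate'' is slightly off---the bound $f_i(\beta)\le c$ follows already from $\beta\in(\epsilon,1-\epsilon)$---but your conclusion that the hypothesis is dispensable in your argument is correct.) So your version is marginally cleaner, but the route is the same.
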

		\begin{proof}
			Since $0 < \epsilon < \alpha < \beta < \alpha +2^{-k}$,
			\[
				\frac{1}{\alpha + 2^{-k}} < \frac{1}{\beta} < \frac{1}{\alpha} < \frac{1}{\epsilon},
			\]
			and hence
			\[
				0 < \frac{1}{\alpha} - \frac{1}{\beta}
				< \frac{1}{\alpha} - \frac{1}{\alpha + 2^{-k}}
				= \frac{2^{-k}}{\alpha (\alpha + 2^{-k})}
				< \frac{2^{-k}}{\alpha^2}
				< \frac{2^{-k}}{\epsilon^2}.
			\]
			It follows, since $|S(X)|\le1$, that
			\[
				0 \! \leq \!
				\Big( 1- |S(\sigma)| + \frac{|S(\sigma)|}{\alpha} 1_{\{S(\sigma) \geq 0 \}} \Big) \! - \!
				\Big( 1 - |S(\sigma)| + \frac{|S(\sigma)|}{\beta} 1_{\{S(\sigma) \geq 0 \}} \Big)
				\! \leq {2^{-k}} \epsilon^{-2}
			\]
			and symmetrically, since $0 < \epsilon < 1-\beta < 1-\alpha < (1-\beta)+2^{-k}$, also that
			\[
				0 \! \leq \!
				\Big( 1 - |S(\sigma)| + \frac{|S(\sigma)|}{1- \beta} 1_{\{S(\sigma) < 0 \}} \Big) \! - \!
				\Big( 1 - |S(\sigma)| + \frac{|S(\sigma)|}{1- \alpha} 1_{\{S(\sigma) < 0 \}} \Big)
				\! \leq {2^{-k}} \epsilon^{-2}.
			\]
			Hence if we write $R_i^X$ for
			$\frac{\mathcal{M}^{X} (\sigma \restrict i)}
			{\mathcal{M}^{X} (\sigma \restrict (i-1))}$
			(with the convention $0/0=0$) we have for all $i \le |\sigma|$ that
			$|R_i^\alpha-R_i^\beta|
			< 2^{-k} \epsilon^{-2}$.
			Furthermore, take $s$ to be a positive integer such that $2^s >1+ 1 / \epsilon$.
			Then we know that $R_i^\alpha$ and $R_i^\beta$ are both
			always smaller than $2^s$.

			\sloppypar
			We can now bound $|\mathcal{M}^{\alpha} (\sigma) - \mathcal{M}^{\beta} (\sigma) |$.
			Consider the case when
			\mbox{ $\mathcal{M}^{\alpha} (\sigma) \geq \mathcal{M}^{\beta} (\sigma)$}
			(the other case is symmetrical).
			Then, writing $n$ for $|\sigma|$,
			\begin{align*}
				\mathcal{M}^{\alpha} (\sigma) - \mathcal{M}^{\beta} (\sigma)
				&= \prod_{i=1}^n R_i^\alpha - \prod_{i=1}^n R_i^\beta \\
				&\le \prod_{i=1}^n (R_i^\beta + 2^{-k} \epsilon^{-2}) - \prod_{i=1}^n R_i^\beta \\
				&= \Biggl[ \prod_{i=1}^n R_i^\beta
				+ \sum_{\substack{Z \subseteq [1, n]\\|Z|<n}}
				(2^{-k} \epsilon^{-2})^{n-|Z|}\prod_{i \in Z} R_i^\beta \Biggr]
				- \prod_{i=1}^{n} R_i^\beta\\
				&\le 2^n (2^{-k} \epsilon^{-2}) (2^s)^n,
			\end{align*}
			where for the last inequality we are assuming that $k$ is large enough that
			\mbox{$2^{-k} \epsilon^{-2} <1$}. The result follows.
			\QED
		\end{proof}

		\begin{lemma}\label{convproductlemma}
			For $s \in \R$, $s>0$ we have
			$\prod_{n=1}^{\infty} (1+s 2^{-n}) < e^{s}$.
		\end{lemma}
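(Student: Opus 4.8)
The plan is to reduce everything to the elementary bound $1 + x < e^x$, valid for all real $x > 0$ (equivalently $\log(1+x) < x$ for $x > 0$, which follows from the concavity of $\log$, or from the power series of $e^x$). The only subtlety is that applying this bound termwise and then passing to the limit would a priori give only the non-strict inequality $\le e^s$, so I will peel off one factor in order to keep the inequality strict.

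Concretely, let $P_N = \prod_{n=1}^N (1 + s 2^{-n})$. The sequence ${(P_N)}_N$ is increasing, so $\prod_{n=1}^\infty (1 + s 2^{-n}) = \sup_N P_N$; this value is finite, which follows from the termwise estimate below. For $N \ge 2$ I write $P_N = (1 + s/2) \prod_{n=2}^N (1 + s 2^{-n})$ and bound the remaining factors using $1 + x \le e^x$:
\[
	\prod_{n=2}^N (1 + s 2^{-n}) \le \prod_{n=2}^N e^{s 2^{-n}}
	= e^{\, s \sum_{n=2}^N 2^{-n}} = e^{\, s(1/2 - 2^{-N})} < e^{s/2}.
\]
Hence $P_N < (1 + s/2)\, e^{s/2}$ for every $N$, so $\sup_N P_N \le (1 + s/2)\, e^{s/2}$. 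Finally $1 + s/2 < e^{s/2}$ because $s/2 > 0$, so $(1 + s/2)\, e^{s/2} < e^{s/2} \cdot e^{s/2} = e^s$, which gives the claimed strict inequality.

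Alternatively one can take logarithms: $\log \prod_{n=1}^\infty (1 + s 2^{-n}) = \sum_{n=1}^\infty \log(1 + s 2^{-n})$, and since $\log(1 + s/2) < s/2$ strictly while $\log(1 + s 2^{-n}) \le s 2^{-n}$ for $n \ge 2$, the sum is $< s/2 + \sum_{n \ge 2} s 2^{-n} = s$. There is essentially no obstacle here; the only point requiring a little care, as noted, is keeping the inequality strict in the limit, which the splitting into one ``strict'' factor and an ``$e^{s/2}$''-tail handles cleanly.
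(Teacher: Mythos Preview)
Your proof is correct and follows essentially the same route as the paper: both arguments reduce to the termwise bound $\ln(1+s2^{-n}) < s2^{-n}$ and then sum the geometric series, the paper invoking the Mean Value Theorem where you cite concavity or the power series. Your explicit handling of strictness in the limit (peeling off one factor) is a nice touch that the paper leaves implicit, but the underlying idea is the same.
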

		\begin{proof}
			It is enough to show that
			\[
				\sum_{n=1}^{\infty} \ln \left(\frac{2^n +s}{2^n} \right)=\sum_{n=1}^{\infty} \left[ \ln ( 2^n +s) - \ln (2^n) \right] <s.
			\]
			The derivative of $\ln$ is the decreasing function $x \mapsto 1/x$
			so by the Mean Value Theorem we have that
			$\ln ( 2^n + s) - \ln (2^n) < s/ 2^n$, which gives the inequality.
			\QED
		\end{proof}

		We are now able to prove the lemmas used in the proof
		of Theorem \ref{computrandomness_thm}.

		\medskip

		\noindent{\textbf{Restatement of Lemma \ref{approxlemmamartin}.}}
		\textit{
		For $0<\alpha<1$,
		there exists $m \in \N$ such that $2^{-m} < \alpha\restrict m'$, and such that if $\sigma \succcurlyeq (\alpha \restrict m')$ and
		$\tau \succcurlyeq (\alpha \restrict m')$ then for all $\eta \in \fs$ and all
		 $n \geq m$ we have:
		\[
			\text{if }~ 0< \tau - \sigma < 2^{-n'} \text{ and }~ |\eta |\leq n+1
			\text{ then }~ |\mathcal{M}^{\sigma} (\eta) - \mathcal{M}^\tau (\eta) | \leq 2^{-n}.
		\]}
		\begin{proof}
			Since $0<\alpha<1$, there is an $\epsilon>0$ and an $m_0$ such that
			for all $n\ge m_0$, 
			\[
				\epsilon < \alpha \restrict n' < (\alpha \restrict n')
				+ 2^{-n'}< 1 - \epsilon.
		 	\]
			Let $r$ be as in Lemma \ref{approxlemma} for this $\epsilon$.
			It is clear that we can find $m_1 \in \N$ such that for all $n \ge m_1$,
			\[
				r(n+1)- n'=r(n+1)- \frac{n(n-1)}{2} < -n.
			\]
			Moreover we can find an $m_2$ such that for all $n\ge m_2$,
			\[
				2^{-n'} \epsilon^{-2} < 1.
			\]
			And, since $\alpha > 0$, we can find an $m_3$ such that for all $n\ge m_3$, $2^{-n} < \alpha \restrict n'$.
			
			Let $m = \max\{m_0, m_1, m_2, m_3\}$. Let $\tau$, $\sigma$, $\eta$ and $n$
			satisfy the assumptions of the Lemma.
			We must have $\epsilon < \sigma < \tau < 1-\epsilon$,
			hence by Lemma \ref{approxlemma} with $k := n'$,
			\[
				|\mathcal{M}^{\sigma} (\eta) - \mathcal{M}^\tau (\eta) |
				\leq 2^{-n'+r|\eta|} \leq 2^{-n'+r(n+1)} < 2^{-n}.
			\]
			\QED
		\end{proof}
		\begin{remark}
			As pointed out by an anonymous referee and as is clear from the proof,
			the function that maps $\alpha\in\MLR$ to $m$ in Lemma \ref{approxlemmamartin} is layerwise computable,
			i.e., from the randomness deficiency of $\alpha$ we can compute an $m$ that works.
		\end{remark}

		Now we suppose $\alpha \in \MLR$, $\alpha < 1/2$ and let $m$ be as given by
		Lemma \ref{approxlemmamartin}. We write $\rho$ for $\alpha \restrict m'$, so that $2^{-m}<\rho$.

		\vspace{10pt}

		\noindent{\textbf{Restatement of Lemma \ref{almost_Kolmogorov_inequality}.}}
		\textit{
		For any number $n \ge m$, any extension
		$\sigma \succcurlyeq \rho$ of length $n'$ and
		any prefix-free set
		$Z \subseteq \bigcup_{k \in \N} {\{ 0, 1 \}}^{k'}$ of extensions of $\sigma $, we have
		\[
			\sum_{\tau \in Z} 2^{-|\tau |} \mathcal{M}^{\tau} (\Gamma (\tau))
			\leq 2^{-|\sigma |} e^2 \left[ 1 +
			\mathcal{M}^{\sigma} (\Gamma (\sigma)) \right].
		\]
		}
		\begin{proof}
			It is enough to show this for every finite $Z$.
			We will use induction on the size $p$ of $Z$,
			with our inductive hypothesis that for all $n \ge m$, all extensions
			$\sigma \succcurlyeq \rho$ of length $n'$ and
			all suitable sets $Z$ of size $p$,
			\[
				\sum_{\tau \in Z} 2^{-|\tau |} \mathcal{M}^{\tau} (\Gamma (\tau))
				\leq 2^{-|\sigma |} \left[ \sum_{i=n}^{\infty} 2 e^2 2^{-i} +
				\mathcal{M}^{\sigma} (\Gamma (\sigma)) \prod_{i= n}^{ \infty} \left(1 +2 {\cdot} 2^{-i} \right) \right].
			\]
			Note that by Lemma \ref{convproductlemma} the right hand side
			is bounded by $2^{-|\sigma |} e^2 [ 1 +
			\mathcal{M}^{\sigma} (\Gamma (\sigma))]$ (as long as $n \ge 2$).

			The base case $|Z|=0$ is trivial. Now suppose that the hypothesis is true for all sets of size less
			than or equal to $p$ and suppose that $|Z|=p+1$.
			Let $\nu$ be the longest extension of $\sigma$ which has length of the form $k'$ for some $k \in \N$
			and which
			is such that all strings in $Z$ are extensions of $\nu$.
			Then for each string $\theta$ of length $k$ there are fewer than $p+1$ strings in
			$Z$ beginning with $\nu \theta$. Recall that $|\nu \theta| = k'+k=(k+1)'$ and that
			$\Gamma(\nu)$ and $\Gamma(\nu\theta)$ are strings of length respectively $k$ and $k+1$.
			Applying the inductive hypothesis, we have
			\begin{align*}
				\sum_{\tau \in Z} 2^{-|\tau |} \mathcal{M}^{\tau} (\Gamma (\tau))
				& \le \n \! \!
				\sum_{\theta \in \{ 0, 1 \}^k} \sum_{\substack{\tau \in Z\\ \tau \succcurlyeq \nu \theta}}
				2^{-|\tau |} \mathcal{M}^{\tau} (\Gamma (\tau)) \\
				& \le \n \! \!
				\sum_{\theta \in \{ 0, 1 \}^k} \n \! 2^{-|\nu \theta|}
					\! \left[ \sum_{i=k+1}^{\infty} \! \! 2 e^2 2^{-i} +
					\mathcal{M}^{\nu \theta} (\Gamma (\nu \theta)) \n \prod_{i= k+1}^{ \infty}
				\n \left(1 \! + \! 2 {\cdot} 2^{-i} \right) \right]\\
				& \le \n \! \!
				\sum_{\theta \in \{ 0, 1 \}^k} \n \! 2^{-|\nu \theta|}
					\! \left[ \, \sum_{i=k+1}^{\infty} \! \! 2 e^2 2^{-i} + e^2 2^{-k} +
					\mathcal{M}^{\nu} (\Gamma (\nu \theta)) \n \prod_{i= k+1}^{ \infty}
				\n \left(1 \! + \! 2 {\cdot} 2^{-i} \right) \right]
			\end{align*}
			where for the last inequality we are using that, by Lemma \ref{approxlemmamartin},
			$\mathcal{M}^{\nu \theta} (\Gamma (\nu \theta)) \le \mathcal{M}^{\nu} (\Gamma (\nu \theta)) + 2^{-k}$.
			Rearranging the last line, we get
			\[
				2^{-|\nu |} \left[ \sum_{i=k+1}^{\infty} \n 2 e^2 2^{-i} + e^2 2^{-k}
				+ \left(\sum_{\theta \in {\{0,1\}}^k} \n 2^{-k} \mathcal{M}^{\nu} (\Gamma (\nu \theta)) \right)
				\prod_{i= k+1}^{ \infty} \n \left(1 +2 {\cdot} 2^{-i} \right) \right].
			\]
			Now we will find an upper bound for the term in round brackets.

			We will write $\wn$ for $\nu \restrict k$ and $S$ for $S(\Gamma(\nu))$.
			By the definition of $\Gamma$, if $\theta \le \wn$ (as real numbers) then
			$\Gamma(\nu \theta) = \Gamma(\nu).1$. Hence, by the definition of $\M$ and the fact
			that $\nu \ge \wn$, summing only over $\theta \in {\{ 0, 1 \}}^k$ we have
			\begin{align*}
				\sum_{\theta \le \wn} 2^{-k} \M^{\nu}(\Gamma(\nu\theta))
				&= \wn \, \M^{\nu}(\Gamma(\nu)) \big( 1 - |S| + \frac{1}{\nu} \,|S| \cdot 1_{ \{S \geq 0 \}} \big)\\
				&\le \M^{\nu}(\Gamma(\nu)) \big( \wn \, \big( 1 - |S| \big) + |S| \cdot 1_{ \{S \geq 0 \}} \big).
			\end{align*}
			Observing that $\nu \le \wn + 2^{-k}$ and $1-\nu \ge 1/2$ and that hence
			\[
				\frac{1-\wn}{1-\nu}
				\le \frac{1-\nu + 2^{-k}}{1 -\nu}
				\le 1+ 2 \cdot 2^{-k},
			\]
			we similarly get that
			\begin{align*}
				\sum_{\theta > \wn} 2^{-k} \M^{\nu}(\Gamma(\nu\theta))
				&= (1-\wn) \M^{\nu}(\Gamma(\nu)) \big( 1 - |S| + \frac{1}{1-\nu} \,|S| \cdot 1_{ \{S < 0 \}} \big)\\
				&\le \M^{\nu}(\Gamma(\nu)) \big( (1-\wn) \big( 1 - |S| \big) +
				|S| \cdot 1_{ \{S < 0 \}} + 2 \cdot 2^{-k} \big).
			\end{align*}

			Summing these gives
			\begin{align*}
				\sum_{\theta \in \{ 0, 1 \}^k} \n \! 2^{-k} \mathcal{M}^{\nu} (\Gamma (\nu \theta))
				&\le \M^{\nu}(\Gamma(\nu))
				\big(1 - |S| + |S| \! \cdot \! 1_{ \{S \geq 0 \}} + |S| \! \cdot \! 1_{ \{S < 0 \}}
				+ 2 \cdot 2^{-k} \big)\\
				&= \M^{\nu}(\Gamma(\nu)) (1 + 2 \cdot 2^{-k}).
			\end{align*}
			Combining this with our earlier bound, we now have
			\[
				\sum_{\tau \in Z} 2^{-|\tau |} \mathcal{M}^{\tau} (\Gamma (\tau))
				\leq 2^{-|\nu |} \left[ \sum_{i=k+1}^{\infty} 2 e^2 2^{-i} + e^2 2^{-k} +
				\mathcal{M}^{\nu} (\Gamma (\nu)) \prod_{i= k}^{ \infty} \left(1 +2 {\cdot} 2^{-i} \right) \right].
			\]

			Finally, let $r=k-n$ so that $|\nu| = k' = (n+r)' \ge n'+ nr = |\sigma|+nr$.
			Recall that
			$1-\nu > \nu \succcurlyeq \rho > 2^{-n}$, which means that a $\nu$-martingale can multiply
			its capital by at most $2^n$ in one round.
			Hence, also using Lemma \ref{approxlemmamartin},
			\[
				2^{-|\nu|} \M^{\nu} (\Gamma (\nu))
				\le 2^{-|\sigma|-nr} {(2^n)}^r \M^{\nu}(\Gamma(\sigma))
				\le 2^{-|\sigma|} \big( 2^{-n} + \M^{\sigma}(\Gamma(\sigma)) \big).
			\]
			This gives us the bound
			\[
				2^{-|\sigma|} \left[ \sum_{i=k+1}^{\infty} 2 e^2 2^{-i} + e^2 2^{-k} +
				e^2 2^{-n} +
				\M^{\sigma}(\Gamma(\sigma)) \prod_{i= k}^{ \infty} \left(1 +2 {\cdot} 2^{-i} \right) \right] ,
			\]
			which is less than or equal to
			\[
				2^{-|\sigma|} \left[ \sum_{i=n}^{\infty} 2 e^2 2^{-i} +
				\M^{\sigma}(\Gamma(\sigma)) \prod_{i= n}^{ \infty} \left(1 +2 {\cdot} 2^{-i} \right) \right].
			\]
			This completes the induction.
			\QED
		\end{proof}

	\section{Kolmogorov--Loveland stochasticity and randomness}\label{sect_hypocratic_stoc}

		We define Kolmogorov--Loveland stochasticity and
		show that, in this setting, the Hippocratic and classical approaches give different sets.
		We also consider whether this is true for Kolmogorov--Loveland randomness,
		and relate this to a statistical question.

		Kolmogorov--Loveland randomness and stochasticity has been studied by, among others,
		Merkle \cite{MR2017360},
		Merkle et al. \cite{Merkle05kolmogorov-lovelandrandomness}, and
		Bienvenu \cite{MR2592183}.

		\subsection{Definitions}
			For a finite string $\sigma \in {\{0,1\}}^{n} $,
			we write $\#0(\sigma)$ for $|\{k < n | \sigma (k) = 0 \}|$
			and $\#1(\sigma)$ for $|\{k < n | \sigma (k) = 1 \}|$.
			We write $\Phi (\sigma)$ for $\# 1 (\sigma)/n$, the frequency
			of $1$s in $\sigma$.

			\begin{definition}[Selection function]\label{def_KL_Stoc}
				A KL selection function is a partial function
				\[
					f: \fs \rightarrow \{ \text{scan},\text{select} \} \times \N .
				\]
				We write $f( \sigma)$ as a pair $(s(\sigma), n (\sigma))$
				and in this paper we insist that
				for all $\sigma$ and $\rho \succ \sigma$ we have $n(\rho) \not= n (\sigma)$,
				so that each bit is read at most once.

				Given input $X$,
				we write $(V_f^X)$ for the sequence of strings seen (with bits either scanned or selected)
				by $f$, so that
				\begin{align*}
					V_f^X (0) &= X (n(\varepsilon)) \\
					V_f^X (k+1) &= V_f^X (k). X (n(V_f^X (k))).
				\end{align*}
				We write
				$U_f^X$ for the subsequence of bits selected by $f$.
				Formally $U_f^X$ is the limit of the monotone sequence of strings
				$(T_f^X)$ where
				\begin{align*}
					T_f^X (0) &= \varepsilon \\
					T_f^X (k+1) &= \left\{
					\begin{array}{ll}
						T_f^X (k) & \mbox{ ~ if } s( V_f^X (k))=\text{scan} \\
						T_f^X (k) . n( V_f^X (k)) & \mbox{ ~ if } s( V_f^X (k))=\text{select.}
					\end{array}
					\right.
				\end{align*}
			\end{definition}

			Informally, the function is used to select bits from $X$ in a non-monotone way.
			If $V$ is the string of bits we have read so far, $n(V)$ gives the location of
			the next bit of $X$ to be read. Then ``$s(V)=\text{scan}$'' means that
			we will just read this bit, whereas ``$s(V)=\text{select}$'' means that we will
			add it to our string $T$ of selected bits.

			\begin{definition}[$\mu_p$-KL stochastic sequence]
				A sequence $X$ is $\mu_p$-KL stochastic if for all $p$-computable KL selection functions~$f$ (notice that $f$ can be a partial function)
				such that the limit $U_f^X$ of $(T_f^X)$ is infinite, we have
				\[
					\lim_{k \rightarrow \infty} \Phi (T_f^X (k)) = p.
				\]
				A sequence $X$ is $\mu_p$-Hippocratic KL stochastic if for all KL selection functions~$f$,
				computable without an oracle $p$, such that $U_f^X$
				is infinite, we have
				\[
					\lim_{k \rightarrow \infty} \Phi (T_f^X (k)) = p.
				\]
			\end{definition}

			\begin{definition}[Generalized Bernoulli measure]
				\label{Bernou_generalized}
				A generalized Bernoulli measure $\lambda$ on $\cs$ is determined
				by a sequence $(b_i^\lambda)$ of real numbers in $(0,1)$.
				For each $i$, the event
				\mbox{$\{ X_1 X_2 \dots | X_i =1 \}$}
				has probability $b_i^\lambda$, and these events are all independent.
				In other words, for all finite strings $w$ the set $[w]$
				of strings beginning with $w$ has measure
				\[
					\lambda ([w]) =\prod_{\substack{i<|w|\\w_i=1}} b_i^{\lambda} \prod_{\substack{i<|w|\\w_i=0}} (1- b_i^{\lambda}).
				\]
				We say the measure is computable if the sequence $(b_i^{\lambda})$ is uniformly
				computable.
			\end{definition}

			In some sense a generalized Bernoulli measure
			treats sequences as though they arise from
			a sequences of independent Bernoulli trials
			with parameter $b_i^{\lambda}$ for the $i^{\text{th}}$ bit.

			Recall that
			for a measure $\lambda$,
			a $ \lambda $-martingale is a function $\mathcal{M}:\cs \rightarrow \mathbb{R}$ satisfying
			\[
				\lambda(\sigma) \mathcal{M}(\sigma) = \lambda (\sigma 1) \mathcal{M}(\sigma 1) + \lambda (\sigma 0) \mathcal{M}(\sigma 0).
			\]
			We now define the notion of a KL martingale, which will be able to select
			which bit it will bet on next, in a generalized Bernoulli measure.
			We use the notation from Definition \ref{def_KL_Stoc}.

			\begin{definition}[$\lambda$-KL randomness]
				Let $\lambda$ be a generalized Bernoulli measure.
				A $\lambda$-KL martingale is a pair $(f, \M)$
				where $f$ is a selection function $(\delta, n)$ and $\mathcal{M}$ is a function $\fs \rightarrow \R$
				such that, for every sequence $X \in \cs$ for which $f$ select infinitely many bits of $X$,
				\[
					\M \left(T_f^X (k) \right) =
					b_{i}^{\lambda} \M \left(T_f^X (k) .1 \right) + (1- b_{i}^{\lambda}) \M \left(T_f^X (k) .0 \right)
				\]
				for all $k \in \N$, where $i=n ( V_f^X (k))$.

				We say that $X$ is $\lambda$-KL random if, for every $\lambda$-KL martingale computable with oracle $(b_i^{\lambda})$,
				the sequence $(\M ( T_f^X (k)))$ is bounded.
				For a sequence $Y$, we say that $X$ is $\lambda$-KL$^Y$ random if this is true
				even when the $\lambda$-KL martingale is also given oracle access to $Y$.
			\end{definition}

		\subsection{Hippocratic stochasticity is not stochasticity}

			We will show that, despite the fact that we now allow non-monotone
			strategies, once again there exist sequences computable from $\alpha$
			which are $\alpha$-Hippocratic KL stochastic, for $\alpha \in \MLR \cap \Delta_2^0$
			(recall that Chaitin's constant $\Omega$ is the prototypical example of
			such an $\alpha$).

			We remark that
			our proof shows also that for $\alpha \in \MLR \cap \Delta_2^0$
			the Hippocratic and classical versions of von Mises--Wald--Church stochasticity are
			different (see \cite[Definition 7.4.1]{Downey:2010:ARC:1205766} for a formal definition).

			We first need a lemma:

			\begin{lemma}[\cite{MSU}, \cite{Downey:2010:ARC:1205766} p.311]\label{lemma_from_MLR_to_KLR}
				If $X$ is Martin-L\"of random for a computable generalized Bernoulli measure $\lambda$,
				then $X$ is $\lambda$-KL random.
			\end{lemma}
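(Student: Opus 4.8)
The plan is to adapt the classical proof that Martin-L\"of randomness (for the uniform measure) implies Kolmogorov--Loveland randomness, which proceeds by a "pinching" or "pair of martingales" argument, and to carry it over to a computable generalized Bernoulli measure $\lambda$ via a measure-preserving rearrangement. First I would recall the key obstruction to the naive approach: a single KL martingale $(f,\M)$ that succeeds on $X$ does not directly yield a classical (monotone) $\lambda$-martingale, because the selection function $f$ chooses bits out of order and the order may depend on the bits already seen. The standard workaround is to consider the two "non-adaptive" halves of the strategy: split a KL martingale that is unbounded on $X$ into a part that wins along the selected subsequence when it bets "upward on average" and a part for the complementary behaviour, and observe that at least one of finitely many associated processes must be unbounded on $X$ as well.

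Concretely, the key steps would be as follows. Step one: given a $\lambda$-KL martingale $(f,\M)$ computable from $(b_i^\lambda)$, with $(\M(T_f^X(k)))$ unbounded, I would define from it a classical $\lambda$-martingale $\M'$ that reads the bits of its input in increasing order of index but simulates $f$ internally: at stage $k$, $\M'$ has seen some set $D_k$ of indices and their values, it computes $n(V_f^{X}(k))$; if that index is the least unread index it places the bet $f$ dictates, and otherwise it "reads ahead," betting nothing, until the simulation catches up. The subtlety is that $f$ may demand an index far to the right, so $\M'$ must scan forward past many bits without betting; this is fine for boundedness of capital but one must check $\M'$ is a legitimate total $\lambda$-martingale and is $(b_i^\lambda)$-computable. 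Step two: since $\M'$ bets only on the selected bits and only when the simulation is synchronized, there is a further difficulty — $f$ might select a bit whose index is to the \emph{left} of bits already read, which a monotone strategy cannot revisit. This is exactly where the two-martingale trick is needed: one runs one monotone martingale that "commits early" to the guess that future selections will go rightward, and a companion process handling leftward jumps; formally one shows that if the KL martingale wins on $X$ then one of these monotone $\lambda$-martingales wins on $X$. Step three: invoke the hypothesis that $X$ is Martin-L\"of random for $\lambda$, which by the (relativized-free, computable-$\lambda$) savings-trick equivalence between $\lambda$-ML tests and computable $\lambda$-martingales means no $(b_i^\lambda)$-computable $\lambda$-martingale succeeds on $X$; this contradicts step two.

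The main obstacle will be step two: making precise the reduction from an adaptively-ordered KL martingale to finitely many monotonically-ordered $\lambda$-martingales, and verifying that the rearrangement respects the product structure of $\lambda$ so that the simulated objects are genuinely $\lambda$-martingales (the fact that $\lambda$ is a product measure with $i^{\text{th}}$ factor $b_i^\lambda$ is what makes "betting on bit $i$" well-defined regardless of what order the other bits were read in). I expect the rest — totality, uniform computability from $(b_i^\lambda)$, and the final contradiction with Martin-L\"of randomness — to be routine. Since the lemma is cited from \cite{MSU} and \cite{Downey:2010:ARC:1205766}, I would in practice simply refer to that proof, noting that it is stated there for the uniform measure but goes through verbatim for any computable generalized Bernoulli measure because every step only uses that $\lambda$ is a computable product measure.
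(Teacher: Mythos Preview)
Your approach has a genuine gap in step two, a misconception in step three, and is far more elaborate than what the lemma requires.

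The paper's proof is essentially two lines: for each $j$, the set of sequences $X$ on which the KL martingale $(f,\M)$ ever attains capital exceeding $j$ is effectively open (only finitely many bits of $X$ are inspected before this event, and $f$, $\M$, $\lambda$ are all computable), and by Kolmogorov's inequality applied to the KL martingale its $\lambda$-measure is at most $1/j$. Hence $(U_{2^n})_n$ is a Martin-L\"of test for $\lambda$; if $X$ is $\lambda$-ML random it escapes some $U_{2^n}$, so $\M$ is bounded along the selected subsequence of $X$. No rearrangement or conversion to monotone martingales is needed.

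Your step two asserts that a computable KL martingale winning on $X$ can be reduced to finitely many \emph{computable monotone} $\lambda$-martingales, one of which also wins on $X$. This cannot hold in general: it would show that every $\lambda$-computably random sequence is $\lambda$-KL random, contradicting the known strict separation (already for the uniform measure there are computably random sequences that are not even KL stochastic). The ``two-martingale'' and splitting arguments you allude to are used for other purposes --- van Lambalgen-type theorems for KL randomness, as in Lemma~\ref{VLforKL} --- not to collapse non-monotone betting to monotone betting.

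Your step three compounds this: $\lambda$-ML randomness is \emph{not} characterized by ``no computable $\lambda$-martingale succeeds'' --- that is computable randomness. ML randomness corresponds to c.e.\ \emph{super}martingales (equivalently, ML tests), and the savings trick does not convert computable into c.e. If in step two you had instead targeted a single c.e.\ $\lambda$-supermartingale, the construction would collapse to the direct ML-test argument above, with no splitting needed.
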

			\begin{proof}[see \cite{Downey:2010:ARC:1205766}]
				Consider the set of sequences in which the player achieves
				capital greater than $j$ when he started with capital $1$.
				For obvious reasons, this is an effective open set of measure less than $1/j$.
				\QED
			\end{proof}

			\begin{theorem}\label{Thm_hypo_KL_stoc}
				Let $\alpha \in \MLR \cap \Delta_2^0$.
				There exists a sequence $Q \in \cs$, computable from $\alpha$, such that
				$Q$ is $\alpha$-Hippocratic KL stochastic.
			\end{theorem}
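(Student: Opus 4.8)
The strategy mirrors the proof of Theorem~\ref{computrandomness_thm}, but with two crucial differences forced by the non-monotone nature of KL selection functions. First, a monotone construction like $Q_n = 1_{\{0.\alpha_{n'+1}\dots\alpha_{n'+n} < 0.\alpha_1\dots\alpha_n\}}$ will not survive a selection function that is allowed to look at later bits of $Q$ before betting on earlier ones; in particular, a selector could first scan the ``fresh'' bits $\alpha_{n'+1}\dots\alpha_{n'+n}$ (which are encoded into later blocks of $Q$) and then ``predict'' $Q_n$. So the blocks used to generate $Q_n$ must be chosen so that they are \emph{not} themselves recoverable from $Q$, or at least not recoverable in a way that a single computable selector can exploit. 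Second, since we only need \emph{stochasticity} (a law-of-large-numbers statement about the selected subsequence) rather than unbounded-capital martingale success, we have more room: we only need that any computably-selected infinite subsequence of $Q$ has limiting frequency exactly $\alpha$, and crucially we are \emph{allowed} to make the bias of the $i^{\text{th}}$ generated bit be some $q_i \to \alpha$, converging \emph{slowly}.

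The plan is as follows. \textbf{Step 1 (construction of $Q$).} Using $\alpha \in \Delta^0_2$, fix a computable approximation $\alpha = \lim_s \alpha[s]$. Define a sequence of biases $q_i \to \alpha$ that converges slowly — slowly enough that no computable statistical test (selection function) on $Q$ can detect the difference between ``Bernoulli($\alpha$)'' and ``Bernoulli($q_i$) for the $i^{\text{th}}$ bit''. Concretely, $Q_i$ is produced by comparing a block of fresh bits of $\alpha$ against the rational $q_i$, so that $Q_i$ looks like a Bernoulli($q_i$) trial conditioned on the earlier bits of $Q$. The block assignment (which bits of $\alpha$ feed which $Q_i$) should be spread out so that, although $Q$ is computable from $\alpha$, the map is ``mixing'' enough that recovering the generating bits of $\alpha$ from finitely much of $Q$ is not something a single KL selector can do to bias its selection. \textbf{Step 2 (Hippocratic KL stochasticity).} Suppose for contradiction that some selection function $f$ computable without $\alpha$ selects an infinite subsequence $U_f^Q$ of $Q$ whose limiting frequency is not $\alpha$. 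The key point is that, without access to $\alpha$, the bits of $\alpha$ used to generate $Q$ genuinely look like i.i.d.\ fair coin flips to $f$ (they have never been ``used'' in a way visible to $f$), so the selected subsequence behaves like a subsequence selected from a sequence of independent Bernoulli($q_i$) trials. By Lemma~\ref{lemma_from_MLR_to_KLR} (which gives that Martin-L\"of randomness for a computable generalized Bernoulli measure implies KL randomness for it, hence a fortiori KL stochasticity), and by the slow convergence $q_i \to \alpha$ (so that the measure $\lambda$ with $b_i^\lambda = q_i$ and the Bernoulli($\alpha$) measure are mutually absolutely continuous, or at least agree on the relevant LLN), the selected subsequence \emph{must} have limiting frequency $\alpha$. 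This contradicts the assumption. To make the reduction to Lemma~\ref{lemma_from_MLR_to_KLR} work, one shows that the $\alpha$-bits feeding $Q$ form a fair-coin-random sequence (true since $\alpha \in \MLR$, after checking the bit-allocation is a computable injection), and that applying the (Hippocratic, hence $\alpha$-free-computable) map produces a sequence that is ML-random for the computable generalized Bernoulli measure $\lambda = \lambda_{(q_i)}$; then KL stochasticity of $Q$ for $\lambda$ is immediate, and since $q_i \to \alpha$ slowly the $\lambda$-LLN limit is $\alpha$.

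\textbf{Main obstacle.} The delicate part is Step~2's claim that ``from $f$'s point of view $Q$ really is $\lambda$-random.'' Since $f$ is \emph{non-monotone}, it can scan bits of $Q$ in any order, including bits that (under the construction) depend on overlapping blocks of $\alpha$; one must show the bit-allocation can be arranged — perhaps using a sufficiently sparse or number-theoretically spread-out schedule — so that the bits of $Q$ that $f$ actually reads before making each selection decision are (jointly) independent of the bit it is about to select, conditioned on what $f$ has seen. Equivalently: the composition ``$\alpha \mapsto Q$'' must push the fair-coin measure forward to exactly the generalized Bernoulli measure $\lambda$, with no hidden correlations, \emph{and} this must be a computable (oracle-free) map so that ML-randomness is preserved. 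Getting a clean construction where this is transparent — and simultaneously ensuring $q_i \to \alpha$ fast enough that the $\lambda$-LLN limit is genuinely $\alpha$ but slow enough that no oracle-free selection function can exploit the approximation to recover $\alpha$ (which would make $Q$ \emph{fail} classical $\mu_\alpha$-KL stochasticity, as desired) — is the crux. I expect the argument to balance these two rates via something like $|q_i - \alpha| \le 1/\log i$ or similar, combined with a block-length schedule chosen so that each $Q_i$ is generated from $\Theta(\log i)$ fresh bits of $\alpha$, enough to simulate a Bernoulli($q_i$) trial to within the required precision.
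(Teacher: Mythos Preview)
Your approach is essentially the paper's: use a computable $\Delta^0_2$-approximation $(\beta^k)_k$ with $\beta^k\to\alpha$ as the threshold, compare it with the disjoint block $\alpha_{k'+1}\dots\alpha_{k'+k}$ to produce $Q_k$, observe that the total computable functional $\alpha\mapsto Q$ pushes the uniform measure forward to the computable generalized Bernoulli measure $\lambda$ with $b_k^\lambda \approx 0.\beta^k$, invoke Lemma~\ref{lemma_from_MLR_to_KLR} to get $\lambda$-KL randomness of $Q$, and finish with a lemma (the paper's Lemma~\ref{lemmaklrandtoklstoc}) saying that $\lambda$-KL randomness together with $b_i^\lambda\to\alpha$ implies $\mu_\alpha$-Hippocratic KL stochasticity.

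Your ``main obstacle,'' however, is a phantom. Once the threshold is the \emph{computable} rational $\beta^k$ (rather than the initial segment $\alpha_1\dots\alpha_k$ used in Theorem~\ref{computrandomness_thm}), and the generating blocks $[k'+1,k'+k]$ are pairwise disjoint (which they already are, since $(k+1)'=k'+k$), each $Q_k$ is a function of a fixed constant and a coordinate set of $\alpha$ disjoint from those determining the other $Q_j$'s. Hence under the uniform measure the $Q_k$ are genuinely independent, and the pushforward is \emph{exactly} a generalized Bernoulli measure; there are no hidden correlations for a non-monotone selector to exploit, and no need for sparse or number-theoretic block schedules. Likewise no rate-balancing is needed: the implication ``$\lambda$-KL random and $b_i^\lambda\to\alpha$ $\Rightarrow$ $\mu_\alpha$-Hippocratic KL stochastic'' holds for \emph{any} rate of convergence, and since $\alpha$ is not computable the approximation $(\beta^k)$ automatically has no computable modulus of convergence, which is precisely what keeps $Q$ from being classically $\mu_\alpha$-KL stochastic.
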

			\begin{proof}
				We will first define the sequence $Q$, and then show that $Q$ is
				$\lambda$-KL random for a certain generalized Bernoulli measure $\lambda$
				for which the parameters $ ( b_i^\lambda)$ converge to $\alpha$.
				Finally we will show that it follows that $Q$ is actually $\alpha$-Hippocratic
				KL stochastic.

				Since $\alpha \in \Delta^2_0$,
				by Shoenfield's Limit Lemma $\alpha$ is the limit of
				a computable sequence of real numbers
				(although the convergence must be extremely slow, since $\alpha$ is not computable).
				In particular there exists a computable sequence of finite strings $(\beta^k)$
				such that $ \beta^k \in {\{ 0,1 \}}^k$ and
				\[
					\lim_{k \rightarrow\infty} \beta^k = \alpha.
				\]
				We define $Q_k$ by
				\[
					Q_k = \left\{
					\begin{array}{ll}
					1 & \mbox{ ~ if } 0.\beta^k \geq 0.\alpha_{k'+1} \dots \alpha_{k'+k} \\
					0 & \mbox{ ~ otherwise. }
					\end{array}
					\right.
				\]
				We set $Q = Q_1 Q_2 \dots$.
				Intuitively, as in the proof of Theorem \ref{computrandomness_thm},
				we are using $\alpha$ as a random bit generator to simulate a
				sequence of Bernoulli trials with parameter~$\beta^k $.

				Notice that the transformation mapping $\alpha$ to $Q$ is a total computable function.
				We know that in general if $g$ is total computable, and $X$ is (Martin-L\"of)
				random for the
				uniform measure $\mu$, then $g(X)$ is
				random for the measure $\mu \circ g^{-1}$ (see \cite{S86} for a proof of this fact).
				Since $\alpha \in \MLR$, in our case
				this tell us that $Q$ is
				random for exactly the generalized Bernoulli measure $\lambda$ given by
				$b_i^{\lambda} = \beta^i$.

				It follows from Lemma \ref{lemma_from_MLR_to_KLR}
				that $Q$ is $\lambda$-KL random. Finally by Lemma \ref{lemmaklrandtoklstoc}
				below we can conclude that $Q$ is $\alpha$-Hippocratic
				KL stochastic, completing the argument.
				\QED
			\end{proof}

			\begin{lemma}\label{lemmaklrandtoklstoc}
				Let $\lambda$ be a computable generalized Bernoulli measure and suppose
				\[
				\lim_{i\rightarrow\infty} b_i^{\lambda} =p.
				\]
				Then every $\lambda$-KL random sequence
				is $\mu_p$-Hippocratic KL stochastic.
			\end{lemma}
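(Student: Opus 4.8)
The plan is to argue by contraposition, in the standard ``randomness implies stochasticity'' style. Assume $X$ is $\lambda$-KL random, let $f$ be any KL selection function computable without an oracle, and suppose that $f$ selects infinitely many bits of $X$ but that the frequency of $1$s among the selected bits does not converge to $p$; the goal is to build from $f$ a $\lambda$-KL martingale that succeeds on $X$, contradicting $\lambda$-KL randomness. The place where the Hippocratic hypothesis enters is that, since $\lambda$ is computable, a \emph{plain} computable $f$ is in particular computable from the parameter sequence $(b_i^\lambda)$, so the pair $(f,\M)$ we construct is a $\lambda$-KL martingale of exactly the kind that $\lambda$-KL randomness excludes. (This is also why the same argument does not show that $X$ is $\mu_p$-KL stochastic: a merely $p$-computable selection function need not be $(b_i^\lambda)$-computable.) List the positions selected by $f$ on $X$, in the order they are selected, as $p_1,p_2,\dots$. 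By the ``read each bit at most once'' condition these are pairwise distinct natural numbers, so $p_j\to\infty$ and hence $b_{p_j}^\lambda\to p$; in particular, for every $\varepsilon>0$ only finitely many of the $b_{p_j}^\lambda$ fall outside $(p-\varepsilon,p+\varepsilon)$. Since the frequency of $1$s fails to converge to $p$, there is a rational $q$ with either $q\in(p,1)$ and $\Phi(T_f^X(k))>q$ for infinitely many $k$, or $q\in(0,p)$ and $\Phi(T_f^X(k))<q$ for infinitely many $k$.

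Consider the first case; the second is symmetric, with the roles of $1$ and $0$ interchanged. Fix a small rational $\varepsilon>0$ (to be pinned down at the end) with $(p-\varepsilon,p+\varepsilon)\subseteq(0,1)$. Define a $\lambda$-KL martingale whose selection component is $f$ and whose betting rule is: when the $j$-th selected bit, coming from position $i=p_j$, is about to be read, multiply the current capital by $q/b_i^\lambda$ if that bit is $1$ and by $(1-q)/(1-b_i^\lambda)$ if it is $0$. Because $b_i^\lambda\cdot(q/b_i^\lambda)+(1-b_i^\lambda)\cdot((1-q)/(1-b_i^\lambda))=1$, this obeys the $\lambda$-martingale identity, keeps the capital positive, and is computable (as $\lambda$ is). After $m$ selections the log-capital is a sum of $m$ per-bit terms: the finitely many selected bits with bias outside $(p-\varepsilon,p+\varepsilon)$ together contribute some fixed amount independent of $m$, a selected $1$ with bias in the window contributes at least $A:=\log\frac{q}{p+\varepsilon}>0$, and a selected $0$ with bias in the window contributes at least $B:=\log\frac{1-q}{1-p+\varepsilon}<0$. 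Writing $n_1(m)$ for the number of $1$s among the first $m$ selected bits, this yields a lower bound of the shape $n_1(m)\,(A-B)+mB-O(1)$, which for $\varepsilon$ small (so that $A-B>0$) is increasing in $n_1(m)$; evaluating it at the infinitely many $m$ for which $n_1(m)>qm$ gives $\log(\text{capital})\ge m\,(qA+(1-q)B)-O(1)$. As $\varepsilon\to0$ the coefficient $qA+(1-q)B$ converges to $q\log\frac{q}{p}+(1-q)\log\frac{1-q}{1-p}$, the Kullback--Leibler divergence between the Bernoulli distributions of parameters $q$ and $p$, which is strictly positive because $q\neq p$. Hence for $\varepsilon$ small enough this coefficient is a positive constant, the capital is unbounded along $X$, and $X$ is not $\lambda$-KL random, a contradiction. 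Therefore the frequency converges to $p$, i.e.\ $X$ is $\mu_p$-Hippocratic KL stochastic.

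The one substantive point is the step above where an excess of $1$s that occurs only along a subsequence of times — rather than as a genuine limiting frequency — is nonetheless enough to drive the fixed-odds martingale's capital to infinity; this works precisely because the log-capital lower bound is monotone in the running count of $1$s (so one may read it off at the favourable times), provided the eligibility window $\varepsilon$ has been chosen narrow enough that the per-step exponential rate $qA+(1-q)B$ stays bounded away from zero. A routine bookkeeping point, handled as usual, is that the capital must formally be presented as a function of the string of selected bits; as in the standard treatment of KL martingales this causes no difficulty, since the betting rule together with the run of $f$ determines it.
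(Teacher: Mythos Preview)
Your proof is correct and follows the same contrapositive shape as the paper's, but with a different choice of martingale. The paper bets a fixed small fraction $\delta$ of the current capital on $1$ at every selected bit (so the multipliers are $1-\delta$ on a $0$ and $1-\delta+\delta/b_i^\lambda$ on a $1$), and then chooses $\delta$ and an auxiliary parameter $\tau$ through a somewhat intricate chain of inequalities to force a positive exponential growth rate. You instead use the likelihood-ratio martingale against the fixed Bernoulli parameter $q$ (multipliers $q/b_i^\lambda$ and $(1-q)/(1-b_i^\lambda)$), which makes the positive drift emerge directly as the Kullback--Leibler divergence $D(q\,\Vert\,p)$ and reduces the parameter-chasing to picking a single $\varepsilon$ small enough. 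Your route is conceptually cleaner and shorter; the paper's fixed-fraction scheme has the minor advantage of fitting the stake-function formalism used earlier in the paper. The bookkeeping point about presenting $\M$ as a function of the selected string alone is glossed over equally in the paper, which simply declares ``we do not care how $\M$ is defined elsewhere''; the standard fix (turn scans into selects and bet nothing on those bits) works for both arguments.
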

			\begin{proof}
				We prove the contrapositive. Without loss of generality we assume that $0 < p < 1/2$.
				Suppose that the sequence $X$ is not $\mu_p$-Hippocratic KL stochastic.
				Then there is a selection function $f$, computed without an oracle for $p$,
				for which the selected sequence $U^X_f$ is infinite and
				$\Phi(T^X_f(k))$ does not tend to the limit $p$.
				We will define a $\lambda$-KL martingale which wins on $X$.

				Without loss of generality,
				there is a rational number $\tau > 0$ such that \mbox{$p + 2\tau<1$} and
				\[
					\limsup_{k \rightarrow\infty} \Phi ( T_f^A (k)) \geq p + 2 \tau.
				\]
				Since $(b_k^{\lambda})$ converges to $p$, by changing the selection function
				if necessary, we may assume without loss of generality that
				$b_k^{\lambda} < p + \tau$ for all locations $k$ read by the selection function.
				We let
				\[
					\gamma = \frac{p+2 \tau}{p+\tau}-1 >0.
				\]
				We let $\delta$ be a rational in $(0,1)$ satisfying both
				\[
					\delta ~ \frac{{(1-p-\tau)}^2}{ {(p + \tau)}^2} \leq \tau
					\mathrm{ \qquad and \qquad}
					\log (1-\delta)> -\frac{\delta}{\ln 2} (1+\gamma /2)
				\]
				(where $\log$ is to base $2$).
				Such a $\delta $ exists because $\log (1-\delta)/ \delta $
				converges to $ -1/\ln 2$ as $\delta > 0 $ tends to 0.
				Note that $\delta (1-p-\tau) / (p+\tau) <1$.

				Let $\M$ be the $\lambda$-KL martingale which begins with capital $1$
				and then, using selection function $f$,
				bets every turn a fraction $ \delta$ of its current capital on the next bit being $1$.
				Formally, writing $T_k$ for $T^X_f(k)$ and $i$ for $n(V^X_f(k))$,
				we put $\M(\varepsilon) = 1$ and for each $k$
				\begin{align*}
					\M(T_k . 0) &= \M (T_k) \Big( 1-\delta \Big),\\
					\M(T_k . 1) &= \M(T_k) \left(1 -\delta + \frac{\delta}{ {b_i^\lambda}} \right)
					\ge \M(T_k) \left(1+ \delta \left(\frac{1}{p + \tau} -1\right) \right).
				\end{align*}
				We do not care how $\M$ is defined elsewhere.

				By induction,
				\[
					\M (T_k) \ge {\left(1+\delta ~ \frac{ 1-p-\tau}{ p+\tau} \right)}^{\# 1 ( T_k)}
					{\left(1- \delta \right)}^{\# 0 ( T_k)}
				\]
				and thus
				\[
					\frac{\log (\M (T_k))}{k}
					\geq \frac{\# 1 (T_k)}{k} \log \left(1+ \delta \, \frac{ 1-p-\tau}{ p+\tau} \right) +
					\frac{\# 0 ( T_k)}{k} \log \left(1- \delta \right).
				\]

				In the following we use standard properties of the logarithm together with our
				definitions of $\tau$, $\delta$ and $\gamma$. In particular, note that
				if we let $x= \delta (1-p-\tau) / (p+\tau)$ then $0<x<1$ so we have
				$\ln (1+x) > x-x^2/2 >x-x^2$. We have
				\begin{align*}
					\limsup_{k \rightarrow \infty} & \, \frac{\log \M(T_k)}{k}
					\ge
					(p+2 \tau) \log \left(1+ \delta \, \frac{ 1-p-\tau}{ p+\tau} \right) +
					(1-p - 2 \tau) \log \left(1- \delta \right) \\
					&\geq \frac{p+2 \tau}{\ln 2} \left(\delta \, \frac{ 1-p-\tau}{ p+\tau}
					- \delta^2 \frac{ (1-p-\tau)^2}{ (p+\tau)^2} \right) -
					\frac{\delta (1+\gamma/2)}{\ln 2} (1-p -2\tau) \\
					&\geq \frac{\delta}{\ln 2} \left((1-p-\tau)(1+\gamma)
					- \delta \, \frac{ (1-p-\tau)^2}{ (p+\tau)^2} - (1+\gamma/2)(1 -p - 2 \tau) \right) \\
					&\geq \frac{\delta}{\ln 2}
					\left(\tau +\frac{\gamma}{2} (1-p-\tau) -\delta \, \frac{ (1-p-\tau)^2}{ (p+\tau)^2} \right) \\
					&\geq \frac{\delta \gamma \, (1-p-\tau)}{2\ln 2} \\
					&> 0.
				\end{align*}
				Hence $\log(\M(T_k)) \ge ck$ infinitely often, for some strictly positive constant $c$. Therefore
				our martingale is unbounded on $U^X_f$.
				\QED
			\end{proof}

		\subsection{Kolmogorov--Loveland randomness}\label{KLR_section}
			We have shown that, for computable randomness and non-monotone stochasticity,
			whether a string is random can depend on whether or not we have access
			to the actual bias of the coin.
			It is natural to ask if this remains true for Kolmogorov--Loveland randomness.

			\begin{lemma}[\cite{Merkle05kolmogorov-lovelandrandomness}]\label{VLforKL}
				For sequences $X, Y \in \cs$,
				$X\oplus Y$ is $\mu_p$-KL random if and only if both
				$X$ is $\mu_p$-KL$^Y$-random and $Y$ is $\mu_p$-KL$^X$-random,
				and this remains true in the Hippocratic setting (that is, where the
				KL martingales do not have oracle access to $p$).
				\QED
			\end{lemma}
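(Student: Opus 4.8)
The plan is to prove each direction by combining a simulation argument with a multiplicative decomposition of the capital of a KL martingale on $X\oplus Y$ into an ``$X$-part'' and a ``$Y$-part''. Here $X\oplus Y$ is the interleaving with $(X\oplus Y)(2i)=X(i)$ and $(X\oplus Y)(2i+1)=Y(i)$; and since we work with $\mu_p$, where every bias $b_i^{\mu_p}$ equals $p$, a $\mu_p$-KL martingale is simply a pair $(f,\M)$ with $\M$ an ordinary $p$-martingale, and likewise with oracles.

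For the easy direction, suppose $X$ is not $\mu_p$-$\mathrm{KL}^{Y}$ random, witnessed by a KL martingale $(f,N)$ with oracle $Y$ whose capital is unbounded on $X$ (the case where $Y$ fails to be $\mu_p$-$\mathrm{KL}^{X}$ random is symmetric). I would build a KL martingale on $X\oplus Y$ that simulates $(f,N)$: whenever $(f,N)$ queries $Y(i)$, it scans position $2i+1$ of $X\oplus Y$, caching the value so that the position is scanned only once; and whenever $(f,N)$ scans, respectively selects and bets with some stake on, position $i$ of $X$, it scans, respectively selects and bets with the same stake on, position $2i$ of $X\oplus Y$. Since scanning does not change capital, the capital of this martingale after its $k$-th bet equals that of $N$ after its $k$-th bet, hence is unbounded on $X\oplus Y$; no oracle for $p$ is introduced, so the same construction works in the Hippocratic setting. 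Thus $X\oplus Y$ is not $\mu_p$-KL random.

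For the hard direction, suppose $X\oplus Y$ is not $\mu_p$-KL random, witnessed by a KL martingale $(g,M)$ whose capital is unbounded on $X\oplus Y$. Since $M$ succeeds, its capital never reaches $0$, so at each select-step the capital is multiplied by a positive factor $\rho$; put $\rho=1$ at scan-steps. Then after $M$'s first $k$ steps on $X\oplus Y$ its capital equals $\prod_{j<k}\rho_j = P_A(k)\,P_B(k)$, where $P_A(k)$ is the product of the $\rho_j$ over those steps at which $M$ read an even (i.e.\ $X$-) position and $P_B(k)$ the product over odd (i.e.\ $Y$-) positions; each such factor, arising from a stake placed against bias $p$, is exactly a factor of a legitimate $\mu_p$-martingale. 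Using the oracle $Y$ and the input $X$ to reconstruct, by simulation, the whole run of $M$ on $X\oplus Y$, I would define a KL martingale on $X$ with oracle $Y$ which \emph{selects every} $X$-position that $M$ reads, betting with $M$'s stake when $M$ selects that position and betting nothing when $M$ merely scans it; its capital, after it has selected as many $X$-positions as $M$ read in its first $k$ steps, is precisely $P_A(k)$. Symmetrically, a KL martingale on $Y$ with oracle $X$ has capital process $P_B$, and again no oracle for $p$ beyond what $M$ already used is needed. Now $\limsup_{k} P_A(k)\,P_B(k)=\infty$, so at least one of the sequences $(P_A(k))_k$, $(P_B(k))_k$ is unbounded (if both were bounded by $C$, the product would be bounded by $C^2$). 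If $(P_A(k))_k$ is unbounded then $M$ has infinitely many $X$-select-steps, so our $X$-martingale selects infinitely many bits of $X$ and is unbounded there, whence $X$ is not $\mu_p$-$\mathrm{KL}^{Y}$ random; symmetrically, if $(P_B(k))_k$ is unbounded then $Y$ is not $\mu_p$-$\mathrm{KL}^{X}$ random. Either way the contrapositive of the hard direction holds, in both the classical and the Hippocratic settings.

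The step I expect to require the most care is the book-keeping in the hard direction: verifying that the extracted object is genuinely a single $\mu_p$-$\mathrm{KL}^{Y}$ martingale, i.e.\ that from the oracle $Y$ together with the string of $X$-bits selected so far one can recompute $M$'s next stake. This works precisely because the extracted martingale is arranged to select \emph{every} $X$-bit that $M$ reads, not only those $M$ bets on, so that the selected-bits string plus $Y$ determines the entire history that $M$ has seen; one must also carefully track the side conditions (infinitely many selections, positivity of capital) needed to convert ``unbounded capital'' into ``success''.
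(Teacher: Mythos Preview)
Your proposal is correct and is precisely the standard argument: the paper itself does not give a proof but merely observes that this is a straightforward adaptation of Proposition~11 in \cite{Merkle05kolmogorov-lovelandrandomness}, and what you have written is exactly that adaptation (the multiplicative splitting of the capital into an $X$-part and a $Y$-part, together with the simulation that selects every $X$-bit the original martingale reads so that the history can be reconstructed from the oracle). Your caveats about book-keeping, partiality of the extracted selection function, and ensuring infinitely many selections are the right points to check and are handled correctly.
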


			The proof is a straightforward adaptation of the proof given in \cite[Proposition 11]{Merkle05kolmogorov-lovelandrandomness}.
			Using Lemma \ref{VLforKL} we can show an equivalence between our question and a statistical question.

			\begin{theorem}\label{two}
				The following two sentences are equivalent.
				\begin{enumerate}
					\item
					The $\mu_p$-Hippocratic KL random and $\mu_p$-KL random sequences are the same.
					\item
					From every $\mu_p$-Hippocratic KL random sequence $X$, we can compute $p$.
				\end{enumerate}
			\end{theorem}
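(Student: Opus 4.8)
The plan is to prove both implications from Lemma~\ref{VLforKL} (van Lambalgen for KL randomness, in both the classical and Hippocratic forms) together with one structural observation: the classical notion is exactly the Hippocratic notion with an oracle for $p$ adjoined. Concretely, a classical $\mu_p$-$\mathrm{KL}^{Y}$ martingale is the same thing as a Hippocratic $\mu_p$-KL martingale equipped with the oracle $p \oplus Y$, so $X$ is classical $\mu_p$-$\mathrm{KL}^{Y}$ random iff $X$ is Hippocratic $\mu_p$-$\mathrm{KL}^{p \oplus Y}$ random; in particular, if $p \le_T Y$ then $p \oplus Y \equiv_T Y$ and the two relativized notions with oracle $Y$ coincide. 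Since removing the oracle only removes martingales, every $\mu_p$-KL random sequence is $\mu_p$-Hippocratic KL random, so sentence~(1) is equivalent to the reverse inclusion: every Hippocratic KL random sequence is KL random.

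For (2)~$\Rightarrow$~(1) I would argue by a halving trick. Assume (2), and let $Z$ be $\mu_p$-Hippocratic KL random; split it as $Z = A \oplus B$. By the Hippocratic van Lambalgen theorem, $A$ is Hippocratic $\mu_p$-$\mathrm{KL}^{B}$ random and $B$ is Hippocratic $\mu_p$-$\mathrm{KL}^{A}$ random; dropping the extra oracle, $A$ and $B$ are each $\mu_p$-Hippocratic KL random, so by~(2) both compute $p$. Now $p \le_T B$, so adjoining $p$ to the oracle $B$ changes nothing, and $A$ is therefore Hippocratic $\mu_p$-$\mathrm{KL}^{p \oplus B}$ random, i.e. classical $\mu_p$-$\mathrm{KL}^{B}$ random; symmetrically $B$ is classical $\mu_p$-$\mathrm{KL}^{A}$ random. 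The classical van Lambalgen theorem then yields that $Z = A \oplus B$ is $\mu_p$-KL random, establishing the inclusion and hence (1).

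For (1)~$\Rightarrow$~(2) I would prove the contrapositive: if some $\mu_p$-Hippocratic KL random $X$ fails to compute $p$, then the two classes differ. If $X$ is not itself $\mu_p$-KL random we are finished, since $X$ already lies in the Hippocratic class but outside the classical one. So assume $X$ is $\mu_p$-KL random, and pair it with an auxiliary sequence $Y \le_T p$ that is $\mu_p$-Hippocratic $\mathrm{KL}^{X}$ random. Then $X \oplus Y$ is Hippocratic KL random: the Hippocratic van Lambalgen theorem needs $Y$ to be Hippocratic $\mu_p$-$\mathrm{KL}^{X}$ random (which holds by the choice of $Y$) and $X$ to be Hippocratic $\mu_p$-$\mathrm{KL}^{Y}$ random, and the latter holds because any Hippocratic $\mathrm{KL}^{Y}$ martingale is computable from $Y \le_T p$, hence is a $p$-computable KL martingale, and all of these are defeated by the KL random sequence $X$. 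However $X \oplus Y$ is \emph{not} $\mu_p$-KL random: a classical $\mu_p$-$\mathrm{KL}^{X}$ martingale carries the oracle $p \oplus X$, which computes $Y$, so it can select positions in the $Y$-component and, knowing their values in advance, bet to win; thus $Y$ is not classical $\mu_p$-$\mathrm{KL}^{X}$ random and the classical van Lambalgen theorem fails for $X \oplus Y$. Hence $X \oplus Y$ is Hippocratic KL random but not KL random, contradicting (1).

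The main obstacle is the construction of this auxiliary $Y$: a sequence computable from $p$ that is nevertheless $\mu_p$-Hippocratic KL random relative to $X$. I expect this to be the relativized analogue of the constructions in Theorems~\ref{computrandomness_thm} and~\ref{Thm_hypo_KL_stoc}, in which $p$ is used as a random-bit generator to simulate Bernoulli trials whose bias converges to $p$, producing a sequence $\le_T p$ that appears $\mu_p$-random to oblivious bettors; one would run that construction with oracle $X$ and pass through a generalized Bernoulli measure together with the relativized Lemmas~\ref{lemma_from_MLR_to_KLR} and~\ref{lemmaklrandtoklstoc}. The delicate point is that the van Lambalgen step requires $Y$ to be genuinely KL \emph{random} (not merely stochastic) relative to $X$, and this forces $p$ to function as a real source of randomness over the oracle $X$; securing this — and thereby using essentially that $p$ escapes $X$ — is where the actual work lies.
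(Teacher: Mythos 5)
Your proof of (2) $\Rightarrow$ (1) is correct and is in substance the paper's own argument: the paper runs it as a contradiction (split a Hippocratic-but-not-classical KL random $X$ as $Y \oplus Z$, get a classical martingale winning on one half with oracle $p$ and the other half, then answer the $p$-queries by computing $p$ from the other half), while you run it in the forward direction; both rest on the same observation that once each half computes $p$, relativized Hippocratic and classical KL randomness coincide, and the two directions of Lemma~\ref{VLforKL} finish it.

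The genuine gap is in (1) $\Rightarrow$ (2). Your contrapositive argument hinges on constructing, from a Hippocratic KL random $X$ with $p \not\le_T X$, an auxiliary $Y \le_T p$ that is $\mu_p$-Hippocratic KL \emph{random} relative to $X$, and you defer this as ``where the actual work lies.'' This is not a routine relativization of Theorems~\ref{computrandomness_thm} and~\ref{Thm_hypo_KL_stoc}: the first defeats only \emph{monotone} computable martingales (and the paper explicitly notes its construction exploits monotonicity in an essential way), and the second achieves only KL \emph{stochasticity}, i.e.\ it defeats selection rules, not non-monotone martingales. Producing a $p$-computable sequence that is Hippocratic KL random is exactly the question the paper cannot answer: if such a $Y$ existed, it would be Hippocratic KL random but not classical KL random (a classical KL martingale computes $Y$ from its oracle $p$ and bets on bits whose values it already knows), so sentence (1) would simply be false --- and the Remark after Theorem~\ref{two} states that the authors do not know the truth value of (1) and (2). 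So your argument reduces the implication to an open problem, and the hypothesis $p \not\le_T X$ (mere non-reducibility) is far too weak to make $p$ serve as a ``random bit source over $X$'' against arbitrary KL martingales. The paper's proof of this direction is short and entirely different: assuming (1), every Hippocratic KL random $X$ is KL random, hence satisfies the law of the iterated logarithm \cite{W00}; this gives an explicit rate at which $\Phi(X \restrict k)$ converges to $p$, from which $p$ can be (non-uniformly) computed from $X$. This statistical argument is the missing idea in your proposal.
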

			\begin{proof}
				For (1) $\Rightarrow$ (2), we know that if $X$ is $\mu_p$-KL random
				then it must satisfy the law of the iterated logarithm (see \cite{W00} for this result).
				Hence we know how quickly $\Phi (X \restrict k)$ converges to $p$ and
				using this we can (non-uniformly) compute $p$ from~$X$.

				For (2) $\Rightarrow$ (1), suppose that $X$ is $\mu_p$-Hippocratic KL random
				but not $\mu_p$-KL random. Let $X = Y \oplus Z$. Then by Lemma \ref{VLforKL},
				$Y$ (say) is not $\mu_p$-KL$^Z$ random, meaning that there is a KL martingale $(\M, f)$
				which, given
				oracle access to $p$ and $Z$, wins on $Y$. On the other hand, both $Y$ and $Z$
				remain $\mu_p$-Hippocratic KL random, so in particular by (2)
				if we have oracle access to $Z$ then
				we can compute $p$. But this means that we can easily convert $(\M, f)$
				into a $\mu_p$-Hippocratic KL martingale which wins on $X$, since to answer oracle
				queries to either $Z$ or $p$ it is enough to scan $Z$ and do some computation.
				\QED
			\end{proof}
			\begin{remark}
				We do not know what the shared truth value of the two sentences in Theorem \ref{two} is.
				We also do not know whether there is a $\mu_p$-Hippocratic computably random sequence to which $p$ is not Turing reducible.
				As an anonymous referee pointed out, this would give a stronger answer to our question ``How much randomness is needed for statistics?''
			\end{remark}

	\section*{Acknowledgements}
		The authors would like to thank Samuel Buss for his invitation to University of California, San Diego.
		Without his help and the university's support this paper would never exist.
		Taveneaux's research has been helped by a travel grant of the \emph{Fondation Sciences Math\'{e}matiques de Paris}.
		Kjos-Hanssen's research was partially supported by NSF (USA) grant no. DMS-0901020.
		Thapen's research was partially supported by grant IAA100190902 of GA AV \v{C}R,
		by Center of Excellence CE-ITI under grant P202/12/G061 of GA CR and RVO: 67985840
		and by a visiting fellowship at the Isaac Newton Institute for the Mathematical
		Sciences in the programme \emph{Semantics and Syntax}.

		An earlier version of this paper appeared in the proceedings of the
		Computability in Europe 2012 conference \cite{KTT_2012}.

	\bibliographystyle{plain}
	\bibliography{randomness_apal}
\end{document}